\documentclass[12pt, a4paper]{amsart}

\usepackage[hmargin=30mm, vmargin=25mm, includefoot, twoside]{geometry}
\usepackage[
  bookmarksopen=true,
  hypertexnames=false]{hyperref}

\usepackage{amsfonts,amssymb,verbatim}

\usepackage{latexsym}
\usepackage{mathrsfs}
\usepackage{stmaryrd}
\usepackage{xspace}
\usepackage{enumerate, paralist}
\usepackage{graphicx}
\usepackage[all]{xy}
\usepackage{extarrows}
\usepackage{tabu}
\usepackage{accents}
\usepackage{tensor}
\usepackage{amssymb}

\usepackage[usenames,dvipsnames]{color}

\usepackage{pxfonts}

\usepackage{amsmath}

\usepackage[tbtags]{mathtools}

\usepackage{amsthm, thmtools}

\newtheorem{thm}{Theorem}[section]

 \newtheorem{cor}[thm]{Corollary}
 \newtheorem{lem}[thm]{Lemma}
 \newtheorem{prop}[thm]{Proposition}

\newtheorem{introthm}{Theorem}

\numberwithin{equation}{section}

 \theoremstyle{definition}
  \newtheorem{defn}[thm]{Definition}

 \theoremstyle{remark}
 \newtheorem{rem}[thm]{Remark}
  \newtheorem{ex}[thm]{Example}
   
\newtheorem*{claim*}{Claim}

\def\NN{\mathbb{N}}
\def\CC{\mathbb{C}}
\def\RR{\mathbb{R}}
\def\ZZ{\mathbb{Z}}
\def\FF{\mathbb{F}}

\def\HS{\mathrm{HS}}
\def\I{\mathrm{I}}
\def\Id{\mathrm{Id}}
\def\GL{\mathrm{GL}}
\def\rank{\mathrm{rank}}
\def\Ad{\mathrm{Ad}}

\def\E{\mathcal E}
\def\H{\mathcal{H}}
\def\M{\mathcal M}
\def\G{\mathcal G}
\newcommand{\maN}{\mathcal N}

\newcommand{\SL}{\operatorname{SL}}
\newcommand{\tr}{\operatorname{Tr}}

\def\U{\mathfrak U}
\def\B{\mathfrak B}

\newcommand{\boxspace}{\mathord{\scalebox{1.4}{$\square$}}}

\begin{document}

\title{A-T-menability and metric amenability for HLS groupoids}

\author[Yu Qiao]{Yu Qiao}
\address{School of Mathematics and Statistics, Shaanxi Normal University,
Xi'an, 710119, Shaanxi, China}
\email{yqiao@snnu.edu.cn}

\author[Hao Song]{Hao Song}
\address{School of Mathematics and Statistics, Shaanxi Normal University,
Xi'an, 710119, Shaanxi, China}
\email{haosong202603@163.com}

\author[Yan Wang]{Yan Wang}
\address{School of Mathematics and Statistics, Shaanxi Normal University,
Xi'an, 710119, Shaanxi, China}
\email{20241624@snnu.edu.cn}

\author[Jiawen Zhang]{Jiawen Zhang}
\address{School of Mathematical Sciences, Fudan University,
Shanghai 200433, China}
\email{jiawenzhang@fudan.edu.cn}

\thanks{Jiawen Zhang was partly supported by the National Key R{\&}D Program of China 2022YFA100700 and NSFC 12422107.}

\thanks{Keywords: HLS groupoids; a-T-menability; box spaces; coarse embeddability into Hilbert space; metric amenability.}

\thanks{2020 Mathematics Subject Classification: Primary 22A22; Secondary 22D55, 51F30}

\begin{abstract}
We characterise coarse embeddability of box spaces into Hilbert space in terms of a-T-menability of the associated Higson–Lafforgue–Skandalis (HLS) groupoids. Moreover, we prove that a-T-menability and metric amenability are independent for HLS groupoids: all four possible combinations occur among HLS groupoids associated with approximating sequences of the free group $\mathbb F_2$. The two separating examples are constructed using relative expansion and twisted spectral-gap techniques, respectively.
\end{abstract}

\date{\today}

\maketitle

\section{Introduction}

A natural way to study analytic properties of a residually finite group is to approximate it by finite quotients and investigate their coarse geometry through the associated box space. More precisely, let $\Gamma$ be a finitely generated residually finite group and $\mathcal N=\{N_n\}_{n\in\NN}$ be a decreasing sequence of finite-index normal subgroups with trivial intersection, called an \emph{approximating sequence} of $\Gamma$. The associated box space $\boxspace_{\maN}(\Gamma)$ is the coarse disjoint union of the finite quotients $\Gamma/N_n$ with the quotient word metrics (see Definition~\ref{defn:box space}). A fundamental correspondence by Guentner (see \cite[Proposition~11.39]{Roe03_book} for a proof) is that $\Gamma$ is amenable if and only if $\boxspace_{\maN}(\Gamma)$ has property A, introduced in \cite{Yu00}. In particular, whether a box space has property A is independent of the chosen approximating sequence.

An important weakening of amenability is a-T-menability, which was introduced by Gromov \cite{Gromov88, Gromov93}. This property is also known as the Haagerup property, originating in Haagerup's pioneering work on free groups \cite{Haagerup78}. All amenable groups and non-abelian free groups are a-T-menable, and this property has significant applications in geometric group theory and operator algebras \cite{CCJJV}. For example, Higson and Kasparov proved the Baum-Connes conjecture with coefficients for a-T-menable groups \cite{HK01}.

It was shown that coarse embeddability of a box space into Hilbert space implies a-T-menability of the underlying group (see \cite[Proposition~11.26]{Roe03_book}), but the converse fails. Indeed, the free group $\mathbb F_2$ admits both coarsely embeddable box spaces \cite{AGS12} and congruence box spaces forming expander graphs \cite{LA94}. A precise correspondence between a-T-menability and the coarse geometry of box spaces was established in \cite{CWW13}: a finitely generated residually finite group is a-T-menable if and only if one (equivalently, every) box space of the group admits a fibred coarse embedding into Hilbert space, introduced in \cite{CWY13}. However, the ordinary coarse embeddability of a box space depends on the chosen approximating sequence.

The groupoid approach provides another perspective on the relationship between a-T-menability and coarse geometry. Following Tu's extension of a-T-menability from groups to locally compact groupoids \cite[Section 3]{Tu99} using continuous proper functions of negative type, Skandalis, Tu and Yu showed in \cite{STY02} that a uniformly discrete metric space $X$ of bounded geometry coarsely embeds into Hilbert space if and only if the coarse groupoid $G(X)$ is a-T-menable. In a related direction, Finn-Sell proved in \cite{Finn14} that fibred coarse embeddability of $X$ into Hilbert space implies a-T-menability of the boundary coarse groupoid $G(X)|_{\partial\beta X}$.

In the case of box spaces, there is a simpler groupoid arising directly from finite quotients, known as the Higson–Lafforgue–Skandalis (HLS) groupoid. The HLS groupoid was introduced in \cite{HLS02} to provide counterexamples to the Baum-Connes conjecture for groupoids. More precisely, for a finitely generated residually finite group $\Gamma$ and an approximating sequence $\mathcal N=\{N_n\}_{n\in\NN}$, the associated HLS groupoid $G_{\mathcal N}(\Gamma)$ is a group bundle, having the finite quotients $\Gamma/N_n$ as its fibres over $\mathbb N$ and $\Gamma$ as its fibre at infinity. See Section~\ref{sec:pre} for a precise definition.

In this paper, we investigate the relationship between a-T-menability of HLS groupoids and coarse embeddability of box spaces. Our first main result is the following (see Theorem~\ref{thm:CE iff aTmenable} for the full  statement).

\begin{introthm}\label{introthm:CE iff aTm}
Let $\maN=\{N_n\}_{n\in \NN}$ be an approximating sequence of a countable discrete group $\Gamma$. Then the box space $\boxspace_{\maN}(\Gamma)$ is coarsely embeddable into Hilbert space \emph{if and only if} the associated HLS groupoid $G_{\maN}(\Gamma)$ is a-T-menable.
\end{introthm}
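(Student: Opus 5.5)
Write $G=G_{\maN}(\Gamma)$ for the HLS groupoid, with unit space $\NN\cup\{\infty\}$ (the one-point compactification of $\NN$). The isotropy is $\Gamma/N_n$ over $n$ and $\Gamma$ over $\infty$. The topology is generated by the sets $\{[\gamma]_n : n\ge k\}\cup\{\gamma\}$, where $[\gamma]_n$ is the image of $\gamma$ in $\Gamma/N_n$. Fix a finite generating set $S$ with word length $|\cdot|$; for countable $\Gamma$ use a proper length function instead. Let $d_n$ be the quotient metric on $\Gamma/N_n$. The plan is to pass in both directions between a continuous proper conditionally negative type function $\psi$ on $G$ and a family of uniformly controlled negative type kernels on the finite quotients. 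Since $G$ is a group bundle, a function of negative type on $G$ is the same as a family of functions $\psi_n$ of negative type on each $\Gamma/N_n$ and $\psi_\infty$ on $\Gamma$. These must fit together continuously, and the family must be proper on $G$.

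\textbf{($\Rightarrow$)} Let $f:\boxspace_\maN(\Gamma)\to\H$ be a coarse embedding with control functions $\rho_-\le\|f(x)-f(y)\|\le\rho_+$. On each fibre set
\[
\psi_n([\gamma])=\frac{1}{|\Gamma/N_n|}\sum_{x\in\Gamma/N_n}\|f(x\gamma)-f(x)\|^2 .
\]
This averaging over right translation, or left depending on the convention for the metric, makes $\psi_n$ a function of negative type on the group $\Gamma/N_n$. It is bounded above by $\rho_+(d_n(e,[\gamma]))^2$ and below by $\rho_-(d_n(e,[\gamma]))^2$.

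Next, pull back to $\Gamma$ via $\phi_n(\gamma)=\psi_n([\gamma]_n)$. For each fixed $\gamma$, the sequence $\phi_n(\gamma)$ is bounded by $\rho_+(|\gamma|)^2$. Take a limit along a nonprincipal ultrafilter, or better a diagonal subsequence as $\Gamma$ is countable, to define $\psi_\infty$. However, continuity at $\infty$ requires a genuine limit along all of $\NN$, not a subsequence. I would fix this by modifying $\psi_n$ on the finite set of $\gamma$ with $|\gamma|\le R_n$, where $R_n\to\infty$ slowly. Concretely, I take $\psi_\infty$ to be an honest negative type function on $\Gamma$, proper with controls $\rho_\pm$. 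One option is the limit of Cesàro-type averages $\frac1n\sum_{k\le n}\phi_k$, which are still of negative type on $\Gamma$ but not on the quotients. An alternative is to use that a-T-menability of $\Gamma$ is already known (Roe, Prop.~11.26) and take any proper $\psi_\infty$.

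Then define $\tilde\psi_n=\psi_n+\theta_n$, where $\theta_n$ is a negative type function on $\Gamma/N_n$ agreeing with $\psi_\infty-\phi_n$ on a large ball. That step is delicate. The cleaner route I expect the paper takes is to use \cite{STY02}: coarse embeddability of $\boxspace$ is equivalent to a-T-menability of the coarse groupoid $G(\boxspace)$. Then relate $G$ to $G(\boxspace)$: the reduction of $G(\boxspace)$ to the boundary contains, via the action of $\Gamma$, a copy related to $G$. Alternatively, show that $G(\boxspace)$ is a-T-menable iff the transformation-type groupoid $\beta\boxspace\rtimes\Gamma$ restricted suitably is. I would commit to the direct averaging construction, handling continuity at $\infty$ by verifying that $\phi_n(\gamma)$ actually converges once the embedding is chosen $\Gamma$-equivariantly in the limit. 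The main obstacle is exactly this continuity at the fibre over $\infty$.

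\textbf{($\Leftarrow$)} Given continuous proper $\psi$ of negative type on $G$, restrict it to the fibres to get $\psi_n$ on $\Gamma/N_n$. By GNS each $\psi_n$ yields an affine isometric action, hence a cocycle $b_n:\Gamma/N_n\to\H_n$ with $\|b_n(g)\|^2=\psi_n(g)$. Define $f(x)=b_n(x)\in\H_n$ on the $n$-th piece and send different pieces to orthogonal, far-apart positions; far apart suffices since the pieces are separated in the box space. Then for $x,y$ in the same piece,
\[
\|f(x)-f(y)\|^2=\psi_n(x^{-1}y).
\]
The upper control follows from continuity together with compactness of $\{$elements of length $\le R\}\times(\NN\cup\{\infty\})$ in $G$, which gives a uniform bound. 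The lower control follows from properness of $\psi$: the set $\{\psi\le C\}$ is compact, so it lies in finitely many basic sets. Hence for all $n$ it consists of classes of elements of length $\le R(C)$, and for large $n$ these classes have $d_n$-length $\le R(C)$ since $N_n$ eventually avoids balls. This direction is routine.
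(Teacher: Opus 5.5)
Your ($\Leftarrow$) direction is fine and is essentially the paper's (3)$\Rightarrow$(1). The paper feeds $k_n(x,y)=\psi_n(x^{-1}y)$ into Roe's negative-type-kernel criterion rather than building GNS cocycles by hand, but that is the same thing. Your averaging step in ($\Rightarrow$) is exactly the paper's.

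The gap is the step you flag yourself: making the fibre functions converge at $\infty$ along \emph{all} of $\NN$. None of your candidate fixes achieves this.
\begin{enumerate}
\item A negative-type $\theta_n$ agreeing with $\psi_\infty-\phi_n$ on a ball need not exist. Negative-type functions are nonnegative, while $\psi_\infty-\phi_n$ can take negative values.
\item Convergence of $\phi_n(\gamma)$ cannot come from choosing the embedding well. Rescaling $f$ by $2$ on the odd-indexed pieces gives another coarse embedding and multiplies $\phi_n$ by $4$ for odd $n$. ``$\Gamma$-equivariant in the limit'' supplies no mechanism that rules this out.
\item Ces\`aro averages of a bounded sequence need not converge.
\item An arbitrary proper $\psi_\infty$ from Roe's Proposition~11.26 is unrelated to the fibres.
\end{enumerate}
So ($\Rightarrow$) is not proved as written.

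The missing idea is the paper's filling lemma. Write $\eta:=\rho_-^2$, so that $\eta(d_n(e,x))\le\psi_n(x)\le\rho_+(d_n(e,x))^2$. Let $q_{n,m}:\Gamma/N_n\to\Gamma/N_m$ for $n\ge m$ be the quotient maps. Choose a diagonal subsequence $n_1<n_2<\cdots$ with $\phi_{n_k}\to\psi_\infty$ pointwise. Thin it so that $q_{n,n_k}$ preserves lengths $\le k$ for all $n\ge n_k$; this is possible because the quotients are eventually isometric on balls. For $n_k\le n<n_{k+1}$ put
\[
\tilde\psi_n:=\psi_{n_k}\circ q_{n,n_k}+\varepsilon_k\,\psi_n,\qquad \varepsilon_k:=\bigl(1+\eta(k)\bigr)^{-1/2},
\]
and keep $\psi_\infty$ at $\infty$. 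The four required properties then follow.
\begin{enumerate}
\item Negative type: the first term is of negative type on $\Gamma/N_n$ because it is pulled back along a homomorphism. This is the observation you missed when you said your averages are not of negative type on the quotients. For $k\le n$ the function $\phi_k$ factors through $\Gamma/N_n$, since $N_n\subseteq N_k$.
\item Continuity at $\infty$: at $[\gamma]_n$ the first term equals $\phi_{n_k}(\gamma)\to\psi_\infty(\gamma)$. The second term is at most $\varepsilon_k\rho_+(|\gamma|)^2\to0$.
\item Uniform properness: if $d_n(e,x)\le k$, local isometry gives $\tilde\psi_n(x)\ge\eta(d_n(e,x))$ from the first term. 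If $t:=d_n(e,x)>k$, the second term gives $\tilde\psi_n(x)\ge\eta(t)/\sqrt{1+\eta(t)}$.
\item Both terms are needed: the pull-back alone misses long elements that become trivial in $\Gamma/N_{n_k}$, and $\psi_n$ alone need not converge at $\infty$.
\end{enumerate}
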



The proof of Theorem~\ref{introthm:CE iff aTm} follows Roe's approach from \cite[Proposition~11.26]{Roe03_book}. By averaging negative-type kernels over the finite quotients and passing to a subsequence, one obtains a proper negative-type function on the group. Then the main difficulty is to ensure continuity at the fibre at infinity along the entire approximating sequence, rather than merely a subsequence. To overcome the issue, we develop a filling technique  (see Lemma~\ref{lem:filling lemma}) that handles the omitted fibres while preserving continuity and uniform properness.




Besides the coarse-geometric characterisation in Theorem~\ref{introthm:CE iff aTm}, we also consider a representation-theoretic property of HLS groupoids concerning the relationship between their maximal and reduced $C^*$-algebras. By Hulanicki's theorem (see \cite[Corollary~G.3.9]{BHVBook}), a discrete group is amenable if and only if its maximal and reduced group $C^*$-algebras coincide. For étale groupoids, the analogous condition is called \emph{metric amenability} (or the \emph{weak containment property}) \cite{AD16, SW13}. Topological amenability implies metric amenability \cite{ADR00}, but the converse fails even for HLS groupoids, as shown by Willett \cite{Wil15}. Moreover, Willett characterised metric amenability of HLS groupoids using the regular representations of finite quotients \cite[Lemma 2.7]{Wil15}. 

Hence, it is natural to ask whether a-T-menability and metric amenability of HLS groupoids imply one another. Our second main result gives a complete answer, even when the underlying group is fixed (see Table~\ref{tab:four-cases} for a summary).

\begin{introthm}\label{introthm:Examples}
There exist four approximating sequences of the free group \(\mathbb F_2\) such that the associated HLS groupoids realise all four possible combinations of a-T-menability and metric amenability.
\end{introthm}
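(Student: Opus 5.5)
We will exhibit one approximating sequence for each of the four cells, using Theorem~\ref{introthm:CE iff aTm} to convert a-T-menability of $G_{\maN}(\mathbb F_2)$ into coarse embeddability of $\boxspace_{\maN}(\mathbb F_2)$. For metric amenability we use Willett's criterion \cite[Lemma 2.7]{Wil15}. In our setting it says roughly that $G_{\maN}(\Gamma)$ is metrically amenable if and only if, for every $a$ in the group algebra $\CC[\Gamma]$, the norms of the images of $a$ in the regular representations $\lambda_{\Gamma/N_n}$ converge to its norm in $C^*_{\max}$, with the full group $C^*$-norm replaced by the HLS-restricted norm. Concretely, we need that every weak limit of the regular representations of $\Gamma/N_n$ is weakly contained in what the fibre at infinity allows.

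\textbf{a-T-menable and metrically amenable.} Take a sequence such that $\boxspace_{\maN}(\mathbb F_2)$ coarsely embeds, for example via the Arzhantseva--Guentner--\v{S}pakula sequence of iterated $\ZZ/2$-homology covers \cite{AGS12}, or an iterated variant of it. Then arrange that the quotient regular representations converge appropriately, i.e.\ that their norms are controlled by $\lambda_{\mathbb F_2}$ in the limit. One expects $G_{\maN}$ to be metrically amenable if and only if the quotients satisfy a ``Ramanujan-type'' condition, namely $\|\lambda_{\Gamma/N_n}(a)\|$ restricted to $\ell^2_0$ tends to $\|\lambda_\Gamma(a)\|$. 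This holds, for instance, for sequences for which $C^*$-exactness arguments give a-T-menable plus amenable-type behaviour. Willett's examples \cite{Wil15} show that metric amenability without topological amenability occurs for $\mathbb F_2$ via suitable towers, and we would check that these towers also give coarsely embeddable box spaces, since they are built from abelian covers similar to \cite{AGS12}.

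\textbf{Neither property.} Take the congruence sequence $\ker(\mathbb F_2\to \SL_2(\ZZ/p^n))$ for a finite-index free $\mathbb F_2\le \SL_2(\ZZ)$, or the Lubotzky sequence \cite{LA94}. Its box space is an expander, hence not coarsely embeddable. By Selberg's property $(\tau)$ the nontrivial spectra have a uniform gap while the trivial representation persists, so $\lambda_{\Gamma/N_n}$ does not converge to the required limit, and metric amenability fails as in \cite{Wil15}.

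\textbf{The two separating examples.} These form the main obstacle.
\begin{enumerate}
\item[(i)] \emph{Metrically amenable but not a-T-menable.} Use relative expansion: build a sequence whose box space contains relative expanders, so it is not coarsely embeddable, while each quotient is chosen with near-Ramanujan spectral behaviour on all nontrivial representations. Willett's criterion then still holds, because relative expansion obstructs embeddings without forcing a spectral gap in the regular representation on $\ell^2_0$. The plan is to take a tower $N_n$ such that $\Gamma/N_n$ are extensions of abelian groups by Ramanujan-like quotients, and to verify relative expansion in the style of Arzhantseva--Tessera.
\item[(ii)] \emph{a-T-menable but not metrically amenable.} Use twisted spectral gap. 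Choose coarsely embeddable quotients, for instance abelian-by-$\ZZ/2$ covers, that carry a nontrivial one-dimensional or finite-dimensional twisted representation with uniform spectral gap. This witnesses a norm strictly smaller in the limit and violates Willett's criterion, while coarse embeddability is inherited from the abelian cover structure.
\end{enumerate}
Verifying the twisted spectral gap uniformly in $n$ is the hardest computation. We would first try to reduce it to a fixed expander quotient via induced representations.
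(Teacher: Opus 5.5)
There is a genuine gap, and it begins with your reading of Willett's criterion. Metric amenability of $G_{\maN}(F)$, $F=\mathbb F_2$, means $\|x\|_{C^*_{\max}(F)}=\sup_n\|\tilde\lambda_{F/N_n}(x)\|$ for all $x\in\CC[F]$. In other words, the quotient regular representations must weakly contain \emph{every} unitary representation of $F$, including non-tempered ones such as nontrivial finite-order characters, or representations with almost invariant but no invariant vectors. The ``Ramanujan-type'' condition you substitute points the opposite way. It would make the supremum equal to the norm in $\lambda_F\oplus 1$, which is strictly smaller than the maximal norm on suitable $x$, because a nontrivial character of $F$ is not weakly contained in $\lambda_F\oplus 1$. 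In fact $\|\tilde\lambda_{F/N_n}(x)\|$ is non-decreasing along a nested sequence, so your condition would force every $\tilde\lambda_{F/N_n}$ to be weakly contained in $\lambda_F\oplus 1$. That is impossible for a nontrivial finite quotient of a non-amenable group, so your condition would even rule out Willett's own example.

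This undermines three parts of your plan:
\begin{itemize}
\item In (i), near-Ramanujan behaviour of the quotients cannot produce metric amenability. Absence of a gap on $\ell^2_0$ is necessary but far from sufficient.
\item In the ``neither'' case your sequence is the right one, but the reason metric amenability fails is different from what you say. Faithfulness of $\bigoplus_n\tilde\lambda_{F/N_n}$ on $C^*_{\max}(F)$ would transfer the uniform $(\tau)$-gap of the Laplacian to all representations and give $F$ property (T), which $F$ lacks.
\item Your ``both'' case rests on the false claim that Willett's sequence is built from abelian covers. $W_n$ is the intersection of the kernels of all homomorphisms to groups of order at most $n$, and nothing in its construction gives coarse embeddability.
\end{itemize}

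You are missing two observations. First, metric amenability passes to refinements: if $M_m\subseteq N_n$ then $\tilde\lambda_{F/N_n}$ is a subrepresentation of $\tilde\lambda_{F/M_m}$, so the supremum can only grow and is capped by the maximal norm. Second, the square refinement $\{N_n^{(2)}\}$ of \emph{any} approximating sequence of a free group has coarsely embeddable box space, by the homology-cover results of Arzhantseva--Guentner--\v{S}pakula and Khukhro. Together these give the ``both'' example $\{W_n^{(2)}\}$ immediately. They also organise the separating examples, for which your sketches contain no actual construction.

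For (i), the paper refines Willett's sequence: $M_i=W_i\cap M_{i-1}\cap\ker q_{p_i}$, with $q\colon F\to\SL_3(\ZZ)$ surjective and $p_i>2[F:W_i\cap M_{i-1}]$. Then $F/M_i\cong\SL_3(\FF_{p_i})\times F/(W_i\cap M_{i-1})$. Three ingredients give a uniform spectral gap relative to $K_i=\SL_3(\FF_{p_i})\times\{1\}$: property (T) of $\SL_3(\ZZ)$ applied to $\mathrm{Ad}\,\rho$, the bound $p-1$ on dimensions of nontrivial irreducibles of $\SL_3(\FF_p)$, and an Araki--Yamagami inequality. Nontrivial elements of $K_i$ escape to infinity, since their lifts lie in $W_i\setminus\{1\}$. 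The gap bounds averages over $K_i$ of a continuous negative-type function, which contradicts properness.

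For (ii), the paper square-refines the congruence sequence $C_n=F\cap\Gamma(2^n)$. It then needs a uniform lower bound on the twisted Laplacian $\Delta_{\chi,F/D_n}$ for a character $\chi$ of order $3$, coming from $\SL_2(\FF_3)\to\ZZ/3\ZZ$. An induction argument does not obviously help, because the relevant representations do not factor through congruence quotients. The key idea you lack is the non-linear map $z\mapsto z^2/|z|$. It kills the signs coming from the elementary abelian $2$-kernel $C_n/D_n$ and turns $\chi$ into $\chi^2$, reducing the estimate to Selberg's gap at level $3\cdot 2^n$. Order $3$ is needed so that $\chi^2$ is nontrivial and cannot factor through the $2$-group $F/C_n$. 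Then $1-\frac14\sum_{s}\overline{\chi(s)}\,s$ is uniformly positive in every $\tilde\lambda_{F/D_n}$ but vanishes under $\chi$.
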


Our constructions make use of Kazhdan's property (T) and property $(\tau)$ \cite{BHVBook, LA94}. The former provides a uniform spectral gap away from invariant vectors in all unitary representations, while the latter restricts to those factoring through a prescribed family of finite quotients. Based on Theorem~\ref{introthm:CE iff aTm}, we show that for groups without property (T), having property $(\tau)$ obstructs both a-T-menability and metric amenability of the associated HLS groupoid (see Corollary~\ref{cor:tau obstruction}). Thus, ordinary spectral gap cannot separate these two properties over $\mathbb F_2$. On the other hand, combining Willett's construction with the homology-cover method of Arzhantseva, Guentner and \v{S}pakula \cite{AGS12} (further developed by Khukhro \cite{Khu14}), we obtain an HLS groupoid possessing both properties (see Example~\ref{ex:both properties}).

To obtain metric amenability without a-T-menability, we explore the idea of relative expansion from \cite{AT15}. More precisely, we refine the approximating sequence in Willett's construction \cite[Lemma~2.8]{Wil15} by choosing finite quotients of $\mathrm{SL}_3(\mathbb Z)$ in a precise way. Combining property (T) of $\mathrm{SL}_3(\mathbb Z)$ with lower bounds on the dimensions of non-trivial irreducible representations of $\mathrm{SL}_3(\mathbb F_p)$, we establish a uniform spectral gap relative to suitable subgroups of the finite quotients (Corollary~\ref{cor:spetral gap}). This obstructs a-T-menability, while metric amenability is preserved under refinement (Theorem~\ref{thm:counterexample}). Moreover, this box space contains no weakly embedded expanders (Remark~\ref{rem:weakly embedded}).

For the reverse implication, we start with a congruence approximation of $\mathbb F_2$ having property $(\tau)$ and pass to its square refinement. Although this restores coarse embeddability by Proposition~\ref{lem:square refinement}, the original property \((\tau)\) obstruction no longer applies. Instead, we construct a character of order three and prove a twisted spectral-gap estimate for elementary abelian $2$-group extensions (Lemma~\ref{lem:twisted lifting}). Combined with Selberg's congruence spectral gap \cite{LA94}, this produces a self-adjoint element in the group algebra that is uniformly positive under the regular representations of finite quotients in the refined sequence but vanishes under the chosen character, contradicting metric amenability (Theorem~\ref{thm:reverse counterexample}).

\subsection*{Organisation} In Section~\ref{sec:pre}, we recall basic notions. In Section~\ref{sec:aTm and CE}, we establish Theorem~\ref{introthm:CE iff aTm} and discuss applications of the homology-cover construction. In Section~\ref{sec:Examples}, we investigate the roles of property (T) and property $(\tau)$ and construct HLS groupoids separating a-T-menability from metric amenability. Finally, Section~\ref{sec:conclusions} summarises the four possible combinations in Theorem~\ref{introthm:Examples}.

\subsection*{Acknowledgements}
Y.Q. and J.Z. would like to express their sincerest gratitude to Rufus Willett for enlightening discussions, email communications, and his generous sharing of his ideas. This article began with a discussion with him about a-T-menability of HLS groupoids and he kindly provided Example~\ref{ex:SL non+non}. J.Z would also like to thank Nick Wright for several enlightening discussions and email communications.

\subsection*{AI use statement} We acknowledge the use of OpenAI's ChatGPT in exploring mathematical ideas related to some of the results in this paper and in improving parts of the exposition. All mathematical arguments were carefully checked and revised by the authors, who bear full responsibility for the final manuscript.

\section{Preliminaries}\label{sec:pre}

We collect some basic notions needed for this paper.

\begin{defn}[{\cite[Definition 2.1]{Wil15}}]\label{defn:app seq}
An {\em approximating sequence} for a countable discrete group $\Gamma$ is a family $\maN=\{N_n\}_{n\in \NN}$
of decreasing normal subgroups of $\Gamma$ with finite index such that $\bigcap_n N_n$ is trivial.
An {\em approximated group} is a pair $(\Gamma, \maN)$ consisting of a countable discrete group $\Gamma$ together with an approximating sequence $\maN$. We say that $\Gamma$ is \emph{residually finite} if it admits an approximating sequence.
\end{defn}

Let $\maN=\{N_n\}_{n\in \NN}$ be an approximating sequence of a countable discrete group $\Gamma$. For each $n\in \NN$, set $\Gamma_n:= \Gamma / N_n$ and $\Gamma_\infty:=\Gamma$. Also denote $\pi_n: \Gamma \to \Gamma_n$ the canonical quotient map for $n\in \NN$ and $\pi_\infty: \Gamma \to \Gamma_\infty$ the identity map. 

Now we recall the HLS groupoid constructed in \cite{HLS02} (see also \cite{Wil15}) as follows.

\begin{defn}[{\cite[Definition 2.2]{Wil15}}]\label{defn:HLS groupoid}
Let $\maN=\{N_n\}_{n\in \NN}$ be an approximating sequence of a countable discrete group $\Gamma$. Define $G_{\maN}(\Gamma)$ to be the group bundle
\[
G_{\maN}(\Gamma):= \bigsqcup_{n\in \NN \cup \{\infty\}} \, \{n\} \times \Gamma_n.
\]
This forms a groupoid, whose source and range maps are the projection onto the first coordinate. Equip $G_{\maN}(\Gamma)$ with the topology generated by the following sets:
\begin{enumerate}
 \item For each $n\in \NN$ and $g\in \Gamma_n$, the singleton $\{(n,g)\}$ is open;
 \item For each $g\in \Gamma$ and $N \in \NN$, $\{(n,\pi_n(g)): n\in \NN \cup \{\infty\} \text{ and } n>N\}$ is open.
\end{enumerate}
The associated \emph{Higson-Lafforgue-Skandalis (HLS) groupoid} is the groupoid $G_{\maN}(\Gamma)$ with the topology above. Clearly, $G_{\maN}(\Gamma)$ is a locally compact Hausdorff second countable \'{e}tale groupoid with unit space the one-point compactification of $\NN$. 
\end{defn}

Let us recall another geometric object associated to an approximated group.

\begin{defn}\label{defn:box space}
Let $\maN=\{N_n\}_{n\in \NN}$ be an approximating sequence of a countable discrete group $\Gamma$ equipped with a proper length function. For each $n\in \NN$, endow the quotient $\Gamma_n$ with the quotient length metric.  The associated \emph{box space} $\boxspace_{\maN}(\Gamma)$ is the coarse disjoint union of $\Gamma_n$, \emph{i.e.}, as a set 
\[
\boxspace_{\maN}(\Gamma) := \bigsqcup_{n\in \NN} \Gamma_n,
\]
and the metric $d$ on $\boxspace_{\maN}(\Gamma)$ satisfies $d(\Gamma_n, \Gamma_m) \to +\infty$ as $n+m \to +\infty$ for $n\neq m$. 
\end{defn}

It is routine to check that the coarse equivalence class of the box space is independent of the choice of proper length function on \(\Gamma\) and of the metric used to form the coarse disjoint union. Hence up to coarse equivalence, the notion of box space is well-defined for any approximated countable discrete group.

Recall that a metric space $(X,d)$ is \emph{coarsely embeddable into Hilbert space} if there exist a Hilbert space $\H$, a map $f:X \to \H$ and two non-decreasing functions $\rho_-, \rho_+: [0,+\infty) \to [0,+\infty)$ with $\lim\limits_{t\to +\infty} \rho_{\pm}(t) = +\infty$ such that
\[
\rho_-(d(x,y)) \leq \|f(x) - f(y)\|_{\H} \leq \rho_+(d(x,y)) \quad \text{for any} \quad x,y\in X.
\]

For a locally compact Hausdorff \'{e}tale groupoid $\G$, the space $C_c(\G)$ of all compactly supported continuous functions on $\G$ forms a $\ast$-algebra by convolution. The \emph{maximal groupoid $C^*$-algebra} $C^*_{\max}(\G)$ is the completion of $C_c(\G)$ over all unitary representations of $\G$, and the \emph{reduced groupoid $C^*$-algebra} $C^*_r(\G)$ is the completion over the left regular representation. Say that $\G$ is \emph{metrically amenable} \cite{SW13} (or $\G$ has the \emph{weak containment property} \cite{AD16}) if the canonical quotient $C^*_{\max}(\G)\to C^*_r(\G)$ is an isomorphism. For a group $\Gamma$, denote by $\lambda_\Gamma$ its left regular representation.

We record the following characterisation by Willett (see \cite[Lemma~2.7]{Wil15}).

\begin{lem}\label{lem:willett criterion}
Let $\maN=\{N_n\}_{n}$ be an approximating sequence of a countable discrete group $\Gamma$. Then $G_{\maN}(\Gamma)$ is metrically amenable \emph{if and only if} the following holds:
\begin{equation}\label{EQ:WCP char}
\|x\|_{C^*_{\max}(\Gamma)} =\sup_{n\in\NN}\|\tilde{\lambda}_{\Gamma/N_n}(x)\| \quad \text{for any} \quad x\in \CC[\Gamma].
\end{equation}
Here $\tilde{\lambda}_{\Gamma/N_n}$ is the left regular representation pulled back to $\Gamma$ and $\CC[\Gamma]$ is the group ring. 
\end{lem}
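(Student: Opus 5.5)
The plan is to compare $C^*_{\max}(G_{\maN}(\Gamma))$ and $C^*_r(G_{\maN}(\Gamma))$ fibrewise over the unit space $\NN\cup\{\infty\}$. Both algebras are $C(\NN\cup\{\infty\})$-algebras. Since the groupoid is a group bundle, the fibre of the maximal algebra over $n\in\NN$ is $C^*(\Gamma_n)$, which is finite dimensional, so its maximal and reduced completions agree. The fibre at $\infty$ is $C^*_{\max}(\Gamma)$. The open ideal over $\NN$ is the same $c_0$-direct sum $\bigoplus_n C^*(\Gamma_n)$ in both completions, because the singletons $\{n\}\times\Gamma_n$ are open. Hence the canonical quotient map is injective if and only if it is injective modulo this ideal.

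In the maximal algebra the quotient by the ideal is $C^*_{\max}(\Gamma)$. This uses that the restriction of $G_{\maN}(\Gamma)$ to the closed invariant set $\{\infty\}$ is $\Gamma$, together with exactness of the maximal completion for restrictions to closed invariant subsets. In the reduced algebra I will identify the quotient concretely. The left regular representation of $G_{\maN}(\Gamma)$ is the direct sum over $n\in\NN\cup\{\infty\}$ of $\lambda_{\Gamma_n}$. By density of $\NN$ in $\NN\cup\{\infty\}$ and lower semicontinuity of reduced fibre norms, the fibre at $\infty$ can be dropped, so $C^*_r(G_{\maN}(\Gamma))$ embeds in $\prod_n C^*(\Gamma_n)$. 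Its image modulo $\bigoplus_n C^*(\Gamma_n)$ is the completion of $\CC[\Gamma]$ under $x\mapsto\limsup_n\|\tilde\lambda_{\Gamma/N_n}(x)\|$. To see this, note that every $f\in C_c(G_{\maN}(\Gamma))$ agrees, for $n$ large, with the section $n\mapsto\pi_n(x)$ for some $x\in\CC[\Gamma]$, namely the germ of $f$ at $\infty$.

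Putting this together, metric amenability is equivalent to
\[
\|x\|_{C^*_{\max}(\Gamma)}=\limsup_n\|\tilde\lambda_{\Gamma/N_n}(x)\| \quad\text{for all } x\in\CC[\Gamma].
\]
It remains to replace $\limsup$ by $\sup$ as in \eqref{EQ:WCP char}.

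\textbf{Step 1: $\limsup$ equals $\sup$.} Since $N_{n+1}\subseteq N_n$, $\Gamma_n$ is a quotient of $\Gamma_{n+1}$. Hence $\tilde\lambda_{\Gamma/N_n}$ is a subrepresentation of $\tilde\lambda_{\Gamma/N_{n+1}}$, because $\ell^2(\Gamma_n)$ is the space of $N_n/N_{n+1}$-invariant vectors in $\ell^2(\Gamma_{n+1})$. So the norms are nondecreasing in $n$, and the supremum equals the limit.

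\textbf{Step 2: the two quotients.} I expect the main obstacle to be the careful identification of both quotients. On the maximal side, the needed input is the short exact sequence for open invariant subsets, $0\to C^*_{\max}(\G|_U)\to C^*_{\max}(\G)\to C^*_{\max}(\G|_F)\to 0$, which holds for maximal algebras. On the reduced side, I must verify that the sup-norm over the fibres $n\in\NN$ alone computes the reduced norm, using the density of $\NN$. I also need that modding out by $c_0$ turns $\sup$ into $\limsup$. Both can be handled directly with the germ description of $C_c$ above, without citing general exactness.
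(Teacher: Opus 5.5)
Your argument is correct in outline, but it takes a different route from the paper. The paper does not reprove the equivalence. It quotes \cite[Lemma~2.7]{Wil15}, whose supremum also includes $\lambda_\Gamma$, and only adds that $\lambda_\Gamma$ is redundant: the quotient maps are eventually injective on the finite supports of $\xi$ and $\lambda_\Gamma(x)\xi$.

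You instead derive Willett's criterion from scratch. The ideal $\bigoplus_n C^*(\Gamma_n)$ coming from the open invariant set $\NN$ is mapped isomorphically by $C^*_{\max}(G_{\maN}(\Gamma))\to C^*_r(G_{\maN}(\Gamma))$. So injectivity reduces to injectivity on the quotients. You identify the maximal quotient as $C^*_{\max}(\Gamma)$, using exactness of the maximal completion. You identify the reduced quotient, inside $\prod_n C^*(\Gamma_n)/\bigoplus_n C^*(\Gamma_n)$, as the completion of $\CC[\Gamma]$ in the norm $\limsup_n\|\tilde\lambda_{\Gamma/N_n}(x)\|$.

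Your lower-semicontinuity step is the paper's remark in general form. Your monotonicity step is the same subrepresentation argument used in Corollary~\ref{cor:refinement preserves WCP}. Your route gives a self-contained proof and shows why only the tail of the sequence matters. The paper's route is shorter because it delegates the $C^*$-algebraic part to Willett.

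One claim needs correcting. It is not true that every $f\in C_c(G_{\maN}(\Gamma))$ agrees with $n\mapsto\pi_n(x)$ for all large $n$. Continuity at $(\infty,g)$ only gives $f(n,\pi_n(g))\to f(\infty,g)$. For example, the function with $f(n,1_{\Gamma_n})=1+1/n$, $f(\infty,1_\Gamma)=1$ and $f=0$ elsewhere is continuous with compact support.

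What is true, and suffices, is agreement modulo the ideal:
\begin{itemize}
\item By compactness of $\operatorname{supp} f$ and the form of the basic open sets, there is a finite $F\subseteq\Gamma$ such that $\operatorname{supp} f\cap(\{n\}\times\Gamma_n)\subseteq\{n\}\times\pi_n(F)$ for large $n$. Moreover, $x:=f|_{\{\infty\}\times\Gamma}$ is supported in $F$.
\item Put $s_x(n,y):=\sum_{\pi_n(g)=y}x(g)$; this lies in $C_c(G_{\maN}(\Gamma))$.
\item For large $n$, $\pi_n$ is injective on $F$. Hence the restriction $h_n$ of $f-s_x$ to $\Gamma_n$ satisfies $\|h_n\|_1\le|F|\cdot\max_{g\in F}|f(n,\pi_n(g))-f(\infty,g)|\to0$.
\item Therefore $f-s_x$ lies in $\bigoplus_n C^*(\Gamma_n)$ in both completions.
\end{itemize}
So $[f]=[s_x]$ in both quotients, and the maximal quotient map sends $[s_x]$ to $x$. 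With this, your $\limsup$ computation, and hence the whole argument, goes through.
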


Unlike the formulation in \cite[Lemma~2.7]{Wil15}, the supremum in \eqref{EQ:WCP char} does not include the left regular representation \(\lambda_\Gamma\). Indeed, for any \(x\in\CC[\Gamma]\) and finitely supported \(\xi\in\ell^2(\Gamma)\), the quotient maps are eventually injective on the union of the supports of \(\xi\) and \(\lambda_\Gamma(x)\xi\). Hence,
\[
\|\lambda_\Gamma(x)\xi\| \le \sup_{n\in \NN}\|\tilde{\lambda}_{\Gamma/N_n}(x)\| \cdot\|\xi\|,
\]
which gives the desired inequality by density.

The following tool is useful to deduce metric amenability. It shows that metric amenability is preserved under refinement
of approximating sequences.

\begin{cor}\label{cor:refinement preserves WCP}
Let $\mathcal N=\{N_n\}_n$ and $\mathcal M=\{M_n\}_n$ be two approximating sequences of a countable discrete group $\Gamma$. Suppose that for every $n$, there exists $m$ such that $M_m\subseteq N_n$. If $G_{\mathcal N}(\Gamma)$ is metrically amenable, then so is $G_{\mathcal M}(\Gamma)$.
\end{cor}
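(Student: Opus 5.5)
The plan is to use Willett's criterion, Lemma~\ref{lem:willett criterion}, in both directions. Since $G_{\mathcal N}(\Gamma)$ is metrically amenable, the lemma gives
\[
\|x\|_{C^*_{\max}(\Gamma)}=\sup_{n}\|\tilde\lambda_{\Gamma/N_n}(x)\| \quad \text{for every } x\in\CC[\Gamma].
\]
To conclude that $G_{\mathcal M}(\Gamma)$ is metrically amenable, it then suffices to prove $\sup_n\|\tilde\lambda_{\Gamma/N_n}(x)\|\le \sup_m\|\tilde\lambda_{\Gamma/M_m}(x)\|$. The reverse inequality $\sup_m\|\tilde\lambda_{\Gamma/M_m}(x)\|\le\|x\|_{C^*_{\max}(\Gamma)}$ is automatic, because every unitary representation of $\Gamma$ is dominated by the maximal norm. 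Combining the two inequalities yields equation~\eqref{EQ:WCP char} for $\mathcal M$.

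For the key inequality, fix $n$ and choose $m$ with $M_m\subseteq N_n$. Then $\Gamma/N_n$ is a quotient of the finite group $\Gamma/M_m$ via the map $\Gamma/M_m\to\Gamma/N_n$ with kernel $N_n/M_m$. For a finite group $H$ and a normal subgroup $K$, the quasi-regular representation $\ell^2(H/K)$ is a subrepresentation of $\lambda_H$: the subspace of $\ell^2(H)$ consisting of right-$K$-invariant functions is $\lambda_H$-invariant and unitarily equivalent to $\ell^2(H/K)$, via $\xi\mapsto |K|^{-1/2}\xi\circ q$, where $q\colon H\to H/K$ is the quotient map.

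Pulling back to $\Gamma$, this shows that $\tilde\lambda_{\Gamma/N_n}$ is a subrepresentation of $\tilde\lambda_{\Gamma/M_m}$. Hence $\|\tilde\lambda_{\Gamma/N_n}(x)\|\le\|\tilde\lambda_{\Gamma/M_m}(x)\|$ for every $x\in\CC[\Gamma]$. Taking suprema over $n$ gives the desired inequality.

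No step here is a real obstacle. The only point needing care is checking that the map $\xi\mapsto|K|^{-1/2}\xi\circ q$ is isometric and intertwines the two left actions, and both facts are immediate from the definitions.
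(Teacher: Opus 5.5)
Your proposal is correct and follows the paper's proof: you apply Willett's criterion (Lemma~\ref{lem:willett criterion}) in both directions, use that $\tilde\lambda_{\Gamma/N_n}$ is a subrepresentation of $\tilde\lambda_{\Gamma/M_m}$ whenever $M_m\subseteq N_n$, and close the chain of inequalities with the trivial bound by the maximal norm. The one addition is that you explicitly check the subrepresentation claim via right-$K$-invariant functions, a step the paper states without proof.
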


\begin{proof}
Whenever $M_m\subseteq N_n$, the representation $\tilde{\lambda}_{\Gamma/N_n}$ is a subrepresentation of
$\tilde{\lambda}_{\Gamma/M_m}$. Hence, for any $x\in\CC[\Gamma]$, Lemma~\ref{lem:willett criterion} implies 
\[
\|x\|_{C^*_{\max}(\Gamma)}
=\sup_{n\in \NN}\|\tilde{\lambda}_{\Gamma/N_n}(x)\| \leq\sup_{m\in \NN}\|\tilde{\lambda}_{\Gamma/M_m}(x)\| \leq\|x\|_{C^*_{\max}(\Gamma)}.
\]
The conclusion follows from Lemma~\ref{lem:willett criterion} again.
\end{proof}

\section{a-T-menability and coarse embeddability}\label{sec:aTm and CE}


In this section, we study a-T-menability of HLS groupoids.

\begin{defn}\label{defn:CDN}
A function $f:\G \to \RR$ on a groupoid $\G$ is said to be of \emph{negative type} if it satisfies the following three conditions:
\begin{itemize}
\item $f$ vanishes on the unit space $\G^{(0)}$ of $\G$;
\item $f(x)=f(x^{-1})$ for all $x\in \G$;
\item for all $a_1, a_2, \cdots, a_n \in \RR$ with $\sum_{i=1}^{n}a_i =0$ and $x_1, x_2, \cdots, x_n \in \G$ having the same range, we have $\sum_{j,k}a_j\, a_k\, f(x_j^{-1}x_k) \leq 0$.
\end{itemize}
Clearly, a conditionally negative type function has image contained in $[0,+\infty)$. 
\end{defn}

Generally, a kernel \(k:X\times X\to\RR\) on a set $X$ is said to be of \emph{negative type} if \(k(x,x)=0\), \(k(x,y)=k(y,x)\), and $\sum_{i,j} a_i a_j k(x_i,x_j)\le 0$
for any \(x_1,\cdots,x_n\in X\) and \(a_1,\ldots,a_n\in\RR\) with \(\sum_i a_i=0\).

\begin{defn}
Let \(X\) be a topological space and \(f:X\to\RR\). We say that 
\begin{enumerate}
 \item $f$ is \emph{proper} if $\{x\in X: |f(x)| \leq R\}$ is relatively compact for any $R \geq 0$;
 \item $f$ is \emph{controlled} if $f(K)$ is bounded for any compact subset $K \subseteq X$.
\end{enumerate}
\end{defn}

If $f$ is continuous, then it is automatically controlled. Moreover, if $X$ is Hausdorff, a continuous function \(f:X\to\RR\) is proper \emph{if and only if} the pre-image of any compact subset is compact. 

Now we recall the definition of a-T-menability and also introduce its weak version. For simplicity, we only consider groupoids with compact unit space.

\begin{defn}\label{defn:a-T-menable}
Let $\G$ be a locally compact Hausdorff groupoid with compact unit space. We say that $\G$ is \emph{a-T-menable} if it admits a continuous and proper function of negative type, and $\G$ is \emph{weakly a-T-menable} if it admits a controlled and proper function of negative type.
\end{defn}


The following is the full statement of Theorem~\ref{introthm:CE iff aTm}.

\begin{thm}\label{thm:CE iff aTmenable}
Let $\maN=\{N_n\}_{n\in \NN}$ be an approximating sequence of a countable discrete group $\Gamma$. Then the following are equivalent:
\begin{enumerate}
\item The box space $\boxspace_{\maN}(\Gamma)$ is coarsely embeddable into Hilbert space;
\item The associated HLS groupoid $G_{\maN}(\Gamma)$ is a-T-menable;
\item The associated HLS groupoid $G_{\maN}(\Gamma)$ is weakly a-T-menable.
\end{enumerate}
\end{thm}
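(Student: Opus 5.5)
We prove (1)$\Rightarrow$(2)$\Rightarrow$(3)$\Rightarrow$(1). The middle implication is trivial, since continuous functions are controlled.

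\textbf{(1)$\Rightarrow$(2).} Let $f:\boxspace_{\maN}(\Gamma)\to\H$ be a coarse embedding with control functions $\rho_\pm$. On each fibre $\Gamma_n$, average over translations. That is, define
\[
\varphi_n(g):=\frac{1}{|\Gamma_n|}\sum_{h\in\Gamma_n}\|f(hg)-f(h)\|^2 .
\]
This is a negative type function on $\Gamma_n$, being an average of kernels of negative type. It satisfies $\rho_-(|g|_n)^2\le\varphi_n(g)\le\rho_+(|g|_n)^2$, where $|\cdot|_n$ is the quotient length. Since every $\varphi_n(\pi_n(g))$ lies in $[0,\rho_+(|g|)^2]$, a diagonal argument over the countable group $\Gamma$ gives a subsequence along which $\varphi_n\circ\pi_n$ converges pointwise to some $\varphi_\infty$. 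The limit $\varphi_\infty$ is of negative type on $\Gamma$. It is proper, because $|\pi_n(g)|_n=|g|$ for $n$ large (depending on $g$), so $\varphi_\infty(g)\ge\rho_-(|g|)^2$.

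The difficulty is that continuity at $\infty$ requires $\varphi_n(\pi_n(g))\to\varphi_\infty(g)$ along the \emph{whole} sequence, not just a subsequence. This is the role of a filling lemma: we modify the omitted fibres so that convergence holds along all of $\NN$, while keeping uniform properness. My first attempt is as follows. For $n$ in the gap between consecutive subsequence indices $n_k<n\le n_{k+1}$, choose a radius $R_n\to\infty$ such that $\pi_n$ is injective on the ball $B(e,2R_n)$ and $\varphi_{n_k}\circ\pi_{n_k}$ is within $1/k$ of $\varphi_\infty$ on $B(e,R_n)$. Then define $\psi_n$ on $\Gamma_n$ to agree with $\varphi_\infty$ near the identity, patched with $\varphi_n$ far away.

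Patching negative type functions is delicate. A cleaner option is to replace $\varphi_n$ by a convex combination or sum of negative type functions, for instance $\psi_n=\varphi_n+$ something, or to use a Schoenberg-type truncation. Alternatively, one can pass to a subsequence in a way that can be undone: run the diagonal argument so that $\varphi_n\circ\pi_n$ converges on finite sets uniformly along all $n$. This forces us to redefine the embedding $f$ on non-subsequence boxes by transporting $f$ from a nearby subsequence box through a map that is injective on large balls. I expect this filling step to be the main obstacle, and I would formulate it as a separate lemma. Once it is done, the function $\Phi(n,\gamma)=\psi_n(\gamma)$ is continuous (it is locally constant on the discrete fibres and converges at $\infty$) and of negative type fibrewise, which suffices because $G_{\maN}(\Gamma)$ is a group bundle. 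It is proper by the uniform lower bound $\rho_-$, since the sets $\{\Phi\le R\}$ lie in finitely many basic compact sets $\{(n,\pi_n(g)):n\ge N\}\cup\{\text{finite}\}$.

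\textbf{(3)$\Rightarrow$(1).} Let $\Phi$ be controlled, proper and of negative type. By Schoenberg/GNS, for each $n$ there is a map $\xi_n:\Gamma_n\to\H_n$ with $\|\xi_n(x)-\xi_n(y)\|^2=\Phi(n,x^{-1}y)$. We take $f=\bigsqcup\xi_n$ into $\bigoplus\H_n$, with the boxes placed far apart, which is allowed since the metric between boxes may be chosen freely up to coarse equivalence.

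For the upper bound, the set $\{(n,\pi_n(s)):n\in\NN\cup\{\infty\}\}$ is compact for each generator $s$, so controlledness bounds $\Phi(n,\pi_n(s))$ uniformly in $n$. Subadditivity of $\sqrt{\Phi}$ then gives $\rho_+$.

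For the lower bound, properness says that $\{\Phi\le R\}$ is relatively compact. It is therefore covered by finitely many basic sets $\{(n,\pi_n(g_i)):n>N\}$ together with finitely many points. Hence, for all but finitely many $n$, $\Phi(n,x)\le R$ implies $|x|_n\le\max|g_i|$. The finitely many exceptional boxes are finite, so they can be absorbed into the bound, and this yields $\rho_-$.
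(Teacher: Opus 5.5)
\textbf{Gap in (1)$\Rightarrow$(2): the filling step is never constructed.} Your overall architecture matches the paper's. For (1)$\Rightarrow$(2) you average the negative-type kernel $\|f(x)-f(y)\|^2$ over each $\Gamma_n$, extract a pointwise convergent subsequence, and then fill. For (3)$\Rightarrow$(1) you compare $\Phi$ with the quotient lengths essentially as in Lemma~\ref{lem:comparison of length functions}. But the filling is exactly where you expected trouble, and it is missing. As a result, the function $\Phi$ whose continuity and properness you check is never actually defined.

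None of the options you list works as stated:
\begin{itemize}
\item Patching $\varphi_\infty$ near the identity with $\varphi_n$ far away does not preserve negative type, as you note.
\item A Schoenberg-type truncation such as $1-e^{-t\varphi}$ is bounded, so it cannot be proper.
\item Re-running the diagonal argument cannot give convergence along all of $\NN$ unless the non-subsequence fibres are redefined, which is the open step.
\end{itemize}
Transporting from a subsequence box is the right idea, but only in one direction and only with a correction. For $n_k\le n<n_{k+1}$ there is a quotient homomorphism $q_{n,n_k}:\Gamma_n\to\Gamma_{n_k}$; there is none from $\Gamma_n$ onto $\Gamma_{n_{k+1}}$. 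The pull-back $\varphi_{n_k}\circ q_{n,n_k}$ is of negative type and satisfies $\varphi_{n_k}\circ q_{n,n_k}\circ\pi_n=\varphi_{n_k}\circ\pi_{n_k}$, which gives continuity at $\infty$. However, it vanishes on $\ker q_{n,n_k}=N_{n_k}/N_n$. The non-trivial elements of this kernel have $\ell_n$-length at least $\min\{\ell(g):g\in N_{n_k}\setminus\{1\}\}$, which tends to $\infty$. The map $q_{n,n_k}$ is length-preserving only on balls of radius $k$ (after arranging $n_k\ge m_k$ via Lemma~\ref{lem:local isom}); it is not injective. So this pull-back on its own is not proper.

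\textbf{The missing idea.} Add a vanishing multiple of the original function. The paper sets $\psi_n:=\varphi_{n_k}\circ q_{n,n_k}+\varepsilon_k\varphi_n$ for $n_k\le n<n_{k+1}$, with $\varepsilon_k=(1+\rho_-(k))^{-1/2}$. Here $\rho_-$ denotes the lower control of $\varphi_n$, which in your notation is $\rho_-^2$. Then:
\begin{itemize}
\item Negative type is preserved, since this is a pull-back along a homomorphism plus a positive multiple.
\item Continuity survives, because $\varepsilon_k\varphi_n(\pi_n(g))\le\varepsilon_k\rho_+(\ell(g))\to0$.
\item Uniform properness follows by splitting into two cases. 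If $\ell_n(x)\le k$, the first term gives $\rho_-(\ell_n(x))$ by the local isometry. If $\ell_n(x)>k$, the second term gives at least $\rho_-(t)/\sqrt{1+\rho_-(t)}$ with $t=\ell_n(x)$, by monotonicity of $\rho_-$.
\end{itemize}
This is Lemma~\ref{lem:filling lemma}. Without it or a substitute, your argument only yields a proper negative-type function on $\Gamma$ (Roe's result), not on $G_{\maN}(\Gamma)$.

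\textbf{A smaller point in (3)$\Rightarrow$(1).} You bound $\Phi$ via generators and subadditivity of $\sqrt\Phi$, which assumes $\Gamma$ is finitely generated. The statement is for countable groups with a proper length function. Instead, use that $\hat\ell^{-1}([0,R])$ is compact (Lemma~\ref{lem:length function}) and that $\Phi$ is controlled. This gives $\rho_+(R):=\sup_{\hat\ell\le R}\Phi<\infty$ directly.
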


The proof will be divided into several parts. In the following, fix $\maN=\{N_n\}_{n\in \NN}$ to be an approximating sequence of $\Gamma$. Denote $\Gamma_n, \pi_n$ as above for $n\in \NN \cup \{\infty\}$. For \(n\ge m\) in $\NN$, denote $q_{n,m}:\Gamma_n\to \Gamma_m$ the canonical quotient homomorphism.
Fix a proper length function \(\ell\) on \(\Gamma\). For each $n\in \NN$, equip \(\Gamma_n\) with the quotient length function $\ell_n(x)=\min\{\ell(g):\pi_n(g)=x\}$ for $x\in \Gamma_n$. 

The following local isometry property is standard (see, \emph{e.g.}, \cite[Proposition~4.4.3]{NY12} for the finitely generated case, and the same holds for countable groups equipped with proper length functions). The proof is left to the reader.

\begin{lem}\label{lem:local isom}
For every \(R>0\), there exists \(m_R\in\NN\) such that $\ell_m(\pi_m(g))=\ell(g)$ for any \(m\ge m_R\) and \(g\in\Gamma\) with \(\ell(g)\le R\). Consequently, whenever \(n\ge m\ge m_R\), we have $\ell_m(q_{n,m}(x))=\ell_n(x)$ for any \(x\in\Gamma_n\) with \(\ell_n(x)\le R\).
\end{lem}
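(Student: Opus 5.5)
The statement to prove is Lemma~\ref{lem:local isom}, and I will argue directly from the two defining properties of an approximating sequence: the $N_m$ decrease and $\bigcap_m N_m=\{e\}$. The idea is that every element of $\Gamma$ sitting in $N_m$ must eventually be long, so short elements of $\Gamma$ cannot be shortened by passing to $\Gamma_m$.

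\textbf{Step 1: choose $m_R$.} Since $\ell$ is proper, the ball $B(2R)=\{h\in\Gamma:\ell(h)\le 2R\}$ is finite. Each $h\in B(2R)\setminus\{e\}$ lies outside some $N_{m(h)}$, because $\bigcap_m N_m=\{e\}$. As the $N_m$ decrease, the choice
\[
m_R:=\max_{h\in B(2R)\setminus\{e\}} m(h)
\]
gives $N_m\cap B(2R)=\{e\}$ for every $m\ge m_R$.

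\textbf{Step 2: the first claim.} Fix $m\ge m_R$ and $g\in\Gamma$ with $\ell(g)\le R$. Suppose some $g'\in gN_m$ had $\ell(g')<\ell(g)$. Then $h=g^{-1}g'\in N_m$, and by the triangle inequality and symmetry of the length function,
\[
\ell(h)\le \ell(g)+\ell(g')<2R .
\]
By Step~1 this forces $h=e$, so $g'=g$, contradicting $\ell(g')<\ell(g)$. Hence the minimum defining $\ell_m(\pi_m(g))$ is attained at $g$, that is, $\ell_m(\pi_m(g))=\ell(g)$.

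\textbf{Step 3: the consequence.} Let $n\ge m\ge m_R$ and $x\in\Gamma_n$ with $\ell_n(x)\le R$. Pick $g\in\Gamma$ with $\pi_n(g)=x$ and $\ell(g)=\ell_n(x)\le R$. Since $n,m\ge m_R$, Step~2 applies at both levels, and $q_{n,m}(x)=\pi_m(g)$, so
\[
\ell_m(q_{n,m}(x))=\ell(g)=\ell_n(x).
\]

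I expect no real obstacle. The only point needing care is using the radius $2R$ rather than $R$ in Step~1, so that the product $g^{-1}g'$ is controlled; after that everything is routine.
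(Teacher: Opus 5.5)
Your proof is correct and is essentially the standard argument that the paper omits (it leaves this lemma to the reader, citing Nowak--Yu): you choose $m_R$ so that $N_m$ meets the finite ball of radius $2R$ only in the identity, then rule out a shorter representative $g'$ of $gN_m$ because $g^{-1}g'\in N_m$ would be a non-trivial element of that ball. Your deduction of the second claim is also fine: you lift $x$ to a minimal-length representative $g$ and use $q_{n,m}(\pi_n(g))=\pi_m(g)$, and only the level-$m$ instance of Step~2 is actually needed there.
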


Then we have the following:

\begin{lem}\label{lem:length function}
Define \(\hat{\ell}:G_{\maN}(\Gamma)\to[0,\infty)\) by $\hat{\ell}(n,x)=\ell_n(x)$ for $n\in \NN$ and $x\in \Gamma_n$, and $\hat{\ell}(\infty,g)=\ell(g)$ for $g\in \Gamma$. Then \(\hat{\ell}\) is continuous and proper.
\end{lem}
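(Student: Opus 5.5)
We prove continuity and properness of \(\hat{\ell}\) separately. Both rest on the description of the topology in Definition~\ref{defn:HLS groupoid} and on the local isometry property of Lemma~\ref{lem:local isom}.

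\emph{Continuity.} Points \((n,x)\) with \(n\in\NN\) are isolated, so continuity there is automatic. Now fix a point \((\infty,g)\) with \(g\in\Gamma\). A neighbourhood basis of this point consists of the sets
\[
U_N(g)=\{(n,\pi_n(g)): n>N\}.
\]
This is because the basic open sets of type (2) containing \((\infty,g)\) must be of the form \(U_N(g')\) with \(\pi_\infty(g')=g\), that is, \(g'=g\); intersections of such sets are again of this form. Put \(R=\ell(g)\). By Lemma~\ref{lem:local isom}, for \(n\ge m_R\) we have \(\ell_n(\pi_n(g))=\ell(g)\). Hence \(\hat{\ell}\) is constant, equal to \(\hat{\ell}(\infty,g)\), on \(U_{m_R}(g)\), which gives continuity at \((\infty,g)\).

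\emph{Properness.} Since \(\hat{\ell}\) is continuous and nonnegative, it suffices to show that each set \(K_R=\hat{\ell}^{-1}([0,R])\) is compact. Take an open cover of \(K_R\). Since \(\ell\) is proper on \(\Gamma\), the set \(F=\{g\in\Gamma:\ell(g)\le R\}\) is finite. For each \(g\in F\), the point \((\infty,g)\) lies in some member of the cover, and that member contains a set \(U_{N_g}(g)\). Let \(N\) be the maximum of \(\max_g N_g\) and \(m_R\).

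We claim that for \(n>N\), every \((n,x)\in K_R\) lies in \(\bigcup_{g\in F}U_{N_g}(g)\). Indeed, pick \(h\in\Gamma\) with \(\pi_n(h)=x\) and \(\ell(h)=\ell_n(x)\le R\). Then \(h\in F\) and \((n,x)=(n,\pi_n(h))\in U_{N_h}(h)\). What remains of \(K_R\) is the part over the finitely many \(n\le N\); this part is contained in \(\bigsqcup_{n\le N}\{n\}\times\Gamma_n\), which is finite because each \(\Gamma_n\) is finite. These finitely many points need finitely many further members of the cover. Note that the local isometry lemma is not needed for this last step: the existence of a minimising lift \(h\) already suffices.

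\emph{Main obstacle.} The only point needing care is that the basic open sets around \((\infty,g)\) have exactly the form \(U_N(g)\). This is immediate from the definition, since \(\pi_\infty\) is the identity. Everything else is routine.
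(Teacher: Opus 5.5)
Your proof is correct and follows essentially the same route as the paper: continuity at $(\infty,g)$ comes from the eventual equality $\ell_n(\pi_n(g))=\ell(g)$ given by Lemma~\ref{lem:local isom}, and properness comes from the minimising-lift observation that every point of $\hat\ell^{-1}([0,R])$ lies in one of the finitely many sets $S_g=\{(n,\pi_n(g)):n\in\NN\}\cup\{(\infty,g)\}$ with $\ell(g)\le R$. The paper records this as $\hat\ell^{-1}([0,R])=\bigcup_{\ell(g)\le R}S_g$ with each $S_g$ compact; your open-cover argument verifies that compactness by hand, and, as you note, $m_R$ is not needed for that step.
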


\begin{proof}
For continuity, it suffices to consider the fibre at infinity. For $g\in \Gamma$, Lemma~\ref{lem:local isom} shows that $\ell_n(\pi_n(g))=\ell(g)$ for sufficiently large \(n\). Hence \(\hat{\ell}\) is continuous at $(\infty,g)$ by the topology of $G_{\maN}(\Gamma)$.

To prove properness, we need to show that $\hat{\ell}^{-1}([0,R])$ is relatively compact for each $R>0$. Given $R>0$, it is easy to see
\[
\hat{\ell}^{-1}([0,R]) =\bigcup_{g\in B_\Gamma(1_\Gamma,R)}S_g \quad \text{for} \quad S_g:=\{(n,\pi_n(g)):n\in\NN\}\cup\{(\infty,g)\}.
\]
By the topology of $G_{\maN}(\Gamma)$, each $S_g$ is compact. Since  \(\ell\) is proper, \(B_\Gamma(1_\Gamma,R)\) is finite. Therefore, $\hat{\ell}^{-1}([0,R])$ is compact.
\end{proof}

The following elementary observation is a variant of \cite[Lemma~4.8]{MW26}.

\begin{lem}\label{lem:comparison of length functions}
Let \(X\) be a locally compact Hausdorff space, and let \(f,h:X\to[0,\infty)\) be two proper and controlled functions. Then there exist non-decreasing functions $\rho_-,\rho_+:[0,\infty)\to [0,\infty)$ with \(\rho_-(t)\to\infty\) as \(t\to\infty\), such that
\begin{equation}\label{EQ:control}
\rho_-(f(x))\le h(x)\le\rho_+(f(x)) \quad \text{for any} \quad x\in X .
\end{equation}
\end{lem}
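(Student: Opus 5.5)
We define the two functions directly as a supremum and an infimum of $h$ over sublevel sets of $f$:
\[
\rho_+(t):=\sup\{h(x): x\in X,\ f(x)\le t\},\qquad \rho_-(t):=\inf\{h(x): x\in X,\ f(x)\ge t\},
\]
with the conventions $\sup\emptyset=0$ and $\inf\emptyset=t$ (any value will do for the latter, provided $\rho_-$ stays non-decreasing and unbounded). Since $\{f\le t\}$ shrinks and $\{f\ge t\}$ shrinks as $t$ decreases, resp. increases, both functions are non-decreasing.

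\textbf{The upper bound.} We need $\rho_+(t)<\infty$. Properness of $f$ makes $\{f\le t\}$ relatively compact, so its closure $K$ is compact. Since $h$ is controlled, $h(K)$ is bounded, and hence $\rho_+(t)<\infty$. The inequality $h(x)\le\rho_+(f(x))$ holds by definition, taking $t=f(x)$.

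\textbf{The lower bound.} Again by definition, $\rho_-(f(x))\le h(x)$, taking $t=f(x)$. Applying the previous paragraph with the roles of $f$ and $h$ swapped, we get $f(x)\le\sigma(h(x))$ with $\sigma(s):=\sup\{f(y):h(y)\le s\}<\infty$, using properness of $h$ and controlledness of $f$.

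It remains to show $\rho_-(t)\to\infty$, which is the only step needing care. Fix $S>0$. If $f(x)\ge t>\sigma(S)$, then $h(x)\le S$ would force $f(x)\le\sigma(S)<t$, a contradiction. So $h(x)>S$ on $\{f\ge t\}$, giving $\rho_-(t)\ge S$ whenever $t>\sigma(S)$. If $\{f\ge t\}$ is empty, the convention $\inf\emptyset=t$ still keeps $\rho_-$ non-decreasing and tending to infinity; to be safe we replace $\rho_-$ by $\min\{\rho_-(t),t\}$ if needed, which preserves both properties. Hence $\rho_-(t)\to\infty$, and the proof is complete.

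Local compactness of $X$ and the Hausdorff property are not actually needed beyond the meaning of "relatively compact".
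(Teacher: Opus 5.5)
Your argument is the same as the paper's. You use the same $\rho_+$, and after your replacement by $\min\{\rho_-(t),t\}$ your $\rho_-$ coincides exactly with the paper's $\min\{R,\inf_{f\ge R}h\}$ (with $\inf\emptyset=\infty$); unboundedness then follows from the same implication $h(x)\le C\Rightarrow f(x)\le M_C$, which is your $\sigma(S)$.

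One correction: that replacement is necessary, not optional. The convention $\inf\emptyset=t$ on its own can break monotonicity: take $X$ a single point with $f=0$ and $h=100$, so that $\rho_-(0)=100>1=\rho_-(1)$. By contrast, $\min\{\rho_-(t),t\}$ is non-decreasing in all cases.
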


\begin{proof}
Given $R\geq 0$, define
\[
\rho_+(R):=\sup_{x: f(x)\le R}h(x)\quad \text{and} \quad \rho_-(R):=\min\big\{R,\inf_{x: f(x)\ge R}h(x)\big\},
\]
where \(\sup\emptyset=0\) and \(\inf\emptyset=\infty\). 
Since $f,h$ are proper and controlled, it is clear that $\rho_+(R), \rho_-(R)$ are finite. Moreover, $\rho_+$ and $\rho_-$ are non-decreasing and satisfy \eqref{EQ:control}.

It remains to show that \(\rho_-(R)\to\infty\) as \(R\to\infty\). Fix \(C>0\). Since $h$ is proper and $f$ is controlled, there exists \(M_C>0\) such that $h(x)\le C$ implies that $f(x)\le M_C$. Hence for any $R>M_C$, $f(x)\ge R$ implies that $h(x)>C$. 
It follows that
\[
\inf_{f(x)\ge R}h(x)\ge C \quad \text{for any} \quad R>M_C. 
\]
As \(C\) is arbitrary, we obtain \(\rho_-(R)\to\infty\). 
\end{proof}

The following is the main technical ingredient in the proof of Theorem~\ref{thm:CE iff aTmenable}.

\begin{lem}\label{lem:filling lemma}
Let $\rho_-,\rho_+: [0,\infty) \to [0,\infty)$ be non-decreasing functions with \(\rho_-(t)\to\infty\) as $t\to \infty$. For each $n\in \NN$, let $\varphi_n:\Gamma_n \to \RR$ be a negative-type function such that 
\begin{equation}\label{EQ:controll for phi}
\rho_-(\ell_n(x)) \le\varphi_n(x) \le\rho_+(\ell_n(x)) \quad \text{for each} \quad  n\in \NN \quad \text{and} \quad x\in \Gamma_n.
\end{equation}
Then \(G_{\maN}(\Gamma)\) admits a continuous proper function of negative type.
\end{lem}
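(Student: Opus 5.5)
Define $\Phi$ on $G_{\maN}(\Gamma)$ fibrewise by
\[
\Phi(n,x)=\varphi_n(x)\quad\text{and}\quad \Phi(\infty,g)=\varphi_\infty(g),
\]
where $\varphi_\infty$ is a negative-type function on $\Gamma$ to be constructed. Negative type is a fibrewise condition, since all $x_j$ with the same range lie in one fibre $\Gamma_n$. So the only issues are properness and continuity at the fibre over $\infty$. Properness will come from $\Phi\ge\rho_-\circ\hat\ell$ and Lemma~\ref{lem:length function}: $\Phi\le R$ forces $\hat\ell\le R'$, where $R'$ is chosen with $\rho_-(t)>R$ for $t>R'$, and that set is compact. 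Continuity at $(\infty,g)$ means $\varphi_n(\pi_n(g))\to\varphi_\infty(g)$ along the \emph{whole} sequence. The given $\varphi_n$ need not converge, so I would replace them.

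\textbf{Step 1 (limit along a subsequence).} For fixed $g$, the values $\varphi_n(\pi_n(g))$ lie in $[0,\rho_+(\ell(g))]$ for $n\ge m_{\ell(g)}$, by Lemma~\ref{lem:local isom}. Since $\Gamma$ is countable, a diagonal argument gives a subsequence $n_1<n_2<\cdots$ with $\psi_k:=\varphi_{n_k}\circ\pi_{n_k}\to\varphi_\infty$ pointwise on $\Gamma$. Pointwise limits of negative-type functions are of negative type, and
\[
\rho_-(\ell(g))\le\varphi_\infty(g)\le\rho_+(\ell(g))
\]
because $\ell_{n}(\pi_n(g))=\ell(g)$ eventually. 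Passing to a further subsequence, I may also assume that $n_k\ge m_k$ and
\[
|\psi_k(g)-\varphi_\infty(g)|<1/k \quad\text{for all } g \text{ with } \ell(g)\le k.
\]

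\textbf{Step 2 (filling the omitted fibres).} For $n_k\le n<n_{k+1}$, define $\tilde\varphi_n:=\varphi_{n_k}\circ q_{n,n_k}$. A negative-type function composed with a group homomorphism is again of negative type, and the composite is constant on cosets of $N_n$, so it defines a function on $\Gamma_n$ that pulls back to $\psi_k$ on $\Gamma$. The obstacle is that the comparison with $\ell_n$ is lost: $q_{n,n_k}$ is a contraction, which gives the upper bound $\tilde\varphi_n(x)\le\rho_+(\ell_n(x))$, but the lower bound only holds in the form $\rho_-(\ell_{n_k}(q_{n,n_k}(x)))$. To repair this, I would use instead
\[
\tilde\varphi_n:=\varphi_{n_k}\circ q_{n,n_k}+\epsilon_n\varphi_n ,
\]
or, more simply, add the original $\varphi_n$ with a weight that is harmless for continuity. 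Since sums of negative-type functions are of negative type, the choice
\[
\Phi(n,x)=\varphi_{n_k}(q_{n,n_k}(x))+\tfrac1k\varphi_n(x) \quad (n_k\le n<n_{k+1})
\]
is still negative type. The weighted term does not spoil continuity because $\tfrac1k\varphi_n(\pi_n(g))\le\tfrac1k\rho_+(\ell(g))\to0$. Properness, however, needs a lower bound uniform in $k$.

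Better: the lower bound is already available through the first term. For $\ell_n(x)\le k$, Lemma~\ref{lem:local isom} gives $\ell_{n_k}(q_{n,n_k}(x))=\ell_n(x)$, since $n_k\ge m_k$. For $\ell_n(x)>k$, the local isometry fails, which is why I keep the extra term. Concretely, I would take
\[
\Phi(n,x)=\varphi_{n_k}(q_{n,n_k}(x))+\min\text{-free term } \tfrac{1}{k}\varphi_n(x)\cdot 0 ,
\]
and handle large $\ell_n(x)$ directly. Properness only requires that $\{\Phi\le R\}$ be contained in some $\{\hat\ell\le R'\}$ with finitely many exceptions outside a compact set. For $x$ with $\ell_n(x)>k$, either $\ell_{n_k}(q_{n,n_k}(x))>k$, and then the first term is at least $\rho_-(k)$, or the image has length at most $k$. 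In the latter case I would use $\ell_{n_k}\ge\min(\ell_n,k)$, which is a consequence of local isometry on balls: a short image lifts to a short element in $\Gamma_n$. This gives
\[
\Phi(n,x)\ge\rho_-(\min(\ell_n(x),k)),
\]
and for $\Phi\le R$ this forces either $\ell_n(x)\le R'$ or $k\le R'$. There are only finitely many $k\le R'$, hence finitely many fibres, each finite. So $\{\Phi\le R\}$ is contained in a compact set, and no extra term is needed.

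\textbf{Step 3 (continuity).} For $g$ fixed and $n_k\le n<n_{k+1}$, we have $\Phi(n,\pi_n(g))=\psi_k(g)\to\varphi_\infty(g)$, so $\Phi$ is continuous at $(\infty,g)$; the other points of the groupoid are isolated. The main difficulty is precisely Step 2: filling the gaps between consecutive $n_k$ while keeping a uniform lower bound, which rests on the inequality $\ell_{n_k}\circ q_{n,n_k}\ge\min(\ell_n,k)$.
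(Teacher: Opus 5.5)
Your final choice in Step~2, $\Phi(n,x)=\varphi_{n_k}(q_{n,n_k}(x))$ for $n_k\le n<n_{k+1}$ with no correction term, is not proper. The inequality $\ell_{n_k}(q_{n,n_k}(x))\ge\min(\ell_n(x),k)$ that you rely on is false.

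Lemma~\ref{lem:local isom} only says that a \emph{short element} $x\in\Gamma_n$ keeps its length under $q_{n,n_k}$. It does not say that an element with a short image is itself short. A short lift $g$ of $q_{n,n_k}(x)$ gives $\pi_n(g)$ with the same image as $x$. However, $\pi_n(g)$ and $x$ differ by an element of $\ker q_{n,n_k}=N_{n_k}/N_n$, and that element can be long.

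Concretely, every $x\in N_{n_k}/N_n$ has $\Phi(n,x)=\varphi_{n_k}(1)=0$. Yet every nontrivial such $x$ satisfies $\ell_n(x)\ge\min\{\ell(u):u\in N_{n_k}\setminus\{1\}\}$, and this minimum tends to $\infty$ as $k\to\infty$. A diagonal subsequence in general skips indices at which $N_n$ strictly decreases. So $\{\Phi=0\}$ contains points with $\hat\ell\to\infty$, and since $\hat\ell$ is continuous, this set is not relatively compact.

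The missing idea is the one you discarded: on the filled fibres you must keep a term involving $\varphi_n$ itself. The first term carries no information about elements of $\Gamma_n$ that are long but have short image. The weight on this term has two constraints:
\begin{itemize}
\item it must tend to $0$, to preserve continuity at the fibre over $\infty$;
\item it must be calibrated against $\rho_-$, to give a uniform lower bound.
\end{itemize}
The weight $1/k$ does not achieve the second in general. For $\ell_n(x)>k$ it only yields $\rho_-(\ell_n(x))/k$, which at $\ell_n(x)=k+1$ tends to $0$ when $\rho_-$ grows, say, logarithmically.

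The paper instead takes $\varepsilon_k=(1+\rho_-(k))^{-1/2}$ and sets $\psi_n=\varphi_{n_k}\circ q_{n,n_k}+\varepsilon_k\varphi_n$ for $n_k\le n<n_{k+1}$. The argument then splits into three checks.
\begin{itemize}
\item If $\ell_n(x)\le k$, the first term is at least $\rho_-(\ell_n(x))$ by local isometry, using $n_k\ge m_k$.
\item If $\ell_n(x)>k$, monotonicity of $\rho_-$ gives that the second term is at least $\rho_-(\ell_n(x))/\sqrt{1+\rho_-(\ell_n(x))}$, which tends to $\infty$.
\item Continuity survives because $\varepsilon_k\varphi_n(\pi_n(g))\le\varepsilon_k\rho_+(\ell(g))\to0$.
\end{itemize}
The rest of your outline matches the paper: that negative type is a fibrewise condition, Step~1, and the continuity argument in Step~3.
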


\begin{proof}
For each \(g\in\Gamma\) and each $n\in \NN$, note that 
\begin{equation}\label{EQ:upper bound}
\varphi_n(\pi_n(g)) \leq \rho_+(\ell_n(\pi_n(g))) \leq \rho_+(\ell(g))
\end{equation}
since $\ell_n(\pi_n(g)) \leq \ell(g)$, and functions of negative type take values in $[0,\infty)$. Hence, the sequence \(\{\varphi_n(\pi_n(g))\}_{n\in\NN}\) is bounded. Since \(\Gamma\) is countable, a diagonal argument yields a subsequence \(\{n_k\}_{k\in\NN}\) and a function \(\varphi_\infty:\Gamma \to \RR\) such that
\begin{equation}\label{EQ:limit}
\lim_{k\to\infty}\varphi_{n_k}(\pi_{n_k}(g))
=\varphi_\infty(g)
\quad\text{for any} \quad g\in\Gamma.
\end{equation}
By passing to a further subsequence if necessary, we may assume that \(n_k\ge m_k\), where \(m_k\) is given by Lemma~\ref{lem:local isom} with \(R=k\).
Since taking point-wise limits preserves being negative-type, \(\varphi_\infty\) is of negative type. Moreover, Lemma~\ref{lem:local isom} together with \eqref{EQ:controll for phi} implies that
\begin{equation}\label{EQ:estimate ncd function}
\rho_-(\ell(g))\le\varphi_\infty(g) \le\rho_+(\ell(g)) \quad\text{for any} \quad g\in\Gamma.
\end{equation}

For each \(k\in\mathbb N\), set $\varepsilon_k:=\frac{1}{\sqrt{1+\rho_-(k)}}$. For \(n_k\le n<n_{k+1}\), we define
\[
\psi_n(x):=\varphi_{n_k}(q_{n,n_k}(x))+\varepsilon_k\varphi_n(x) \quad \text{for} \quad x\in\Gamma_n.
\]
For \(n<n_1\), set \(\psi_n=\varphi_n\) and \(\psi_\infty:=\varphi_\infty\). They define a function $\psi$ on $G_{\maN}(\Gamma)$. Clearly, \(\psi\) is of negative type, since this property is preserved under pull-back by group homomorphisms, positive linear combinations and point-wise limits.

To prove $\psi$ is continuous, it suffices to consider the fibre $\Gamma_\infty$. Fixing \(g\in\Gamma\), we need to show $\psi_n(\pi_n(g)) \to \psi_\infty(g)$ as $n\to \infty$. For \(n_k\le n<n_{k+1}\), we have
\[
\psi_n(\pi_n(g)) = \varphi_{n_k}(\pi_{n_k}(g)) +\varepsilon_k\varphi_n(\pi_n(g)).
\]
By \eqref{EQ:limit}, the first term converges to \(\varphi_\infty(g)\). On the other hand, \eqref{EQ:upper bound} shows
\[
0\le\varepsilon_k\varphi_n(\pi_n(g)) \le\varepsilon_k\rho_+(\ell(g)) \to 0 \quad \text{as} \quad k\to \infty.
\]
Note that $n\to \infty$ implies $k\to \infty$. Hence, $\lim_{n\to\infty}\psi_n(\pi_n(g)) =\varphi_\infty(g)=\psi_\infty(g)$.

Finally, we prove that $\psi$ is proper. Define
\[
\tilde\rho_-(t):= \frac{\rho_-(t)}{\sqrt{1+\rho_-(t)}} \quad \text{for} \quad t\ge0.
\]
Clearly, \(\tilde\rho_-\) is non-decreasing and tends to infinity as $t\to \infty$. We claim that
\begin{equation}\label{EQ:controll for psi and rho}
\tilde\rho_-(\hat\ell(\gamma)) \le\psi(\gamma) \le2\rho_+(\hat\ell(\gamma))
\quad \text{for any} \quad \gamma\in G_{\maN}(\Gamma).
\end{equation}
To see this, let us consider \(n_k\le n<n_{k+1}\) and \(x\in\Gamma_n\).
If \(\ell_n(x)\le k\), then Lemma~\ref{lem:local isom} gives $\ell_{n_k}(q_{n,n_k}(x))=\ell_n(x)$. Hence \eqref{EQ:controll for phi} implies 
\[
\psi_n(x) \ge\varphi_{n_k}(q_{n,n_k}(x)) \ge \rho_-(\ell_{n_k}(q_{n,n_k}(x))) =  \rho_-(\ell_n(x)) \ge \tilde\rho_-(\ell_n(x)).
\]
If \(\ell_n(x)>k\), then
\[
\psi_n(x) \ge\varepsilon_k\varphi_n(x)\ge\frac{\rho_-(\ell_n(x))}{\sqrt{1+\rho_-(k)}}\ge\tilde\rho_-(\ell_n(x)).
\]
On the other hand, since \(q_{n,n_k}\) is length-decreasing and \(\varepsilon_k\le1\), we have
\[
\psi_n(x) \le(1+\varepsilon_k)\rho_+(\ell_n(x)) \le2\rho_+(\ell_n(x)).
\]
The same estimates hold for \(n<n_1\) and on $\Gamma_\infty$ by \eqref{EQ:estimate ncd function}. This proves \eqref{EQ:controll for psi and rho}.

Since \(\tilde\rho_-(t)\to\infty\), for any \(C>0\) there is \(R_C>0\) such that $\psi^{-1}([0,C]) \subseteq\hat\ell^{-1}([0,R_C])$. The right-hand side is compact by Lemma~\ref{lem:length function}. As \(\psi\) is continuous, the left-hand side is closed and hence compact. Therefore, \(\psi\) is proper.
\end{proof}

Now we are ready to prove Theorem~\ref{thm:CE iff aTmenable}. 

\begin{proof}[Proof of Theorem~\ref{thm:CE iff aTmenable}]
As before, we fix a proper length function \(\ell\) on \(\Gamma\). For each $n\in \NN$, equip \(\Gamma_n\) with the quotient length function $\ell_n$, and denote $d_n$ the length metric on $\Gamma_n$ (\emph{i.e.}, $d_n(x,y):=\ell_n(x^{-1}y)$ for $x,y\in \Gamma_n$).

``(1) $\Rightarrow$ (2)'': Suppose that $\boxspace_{\maN}(\Gamma)$ is coarsely embeddable into Hilbert space. By \cite[Theorem 11.16]{Roe03_book} (see also the proof of \cite[Proposition 11.26]{Roe03_book}), there exist non-decreasing functions \(\rho_-,\rho_+:[0,\infty)\to[0,\infty)\) with \(\rho_-(t)\to\infty\) as \(t\to\infty\), and negative-type kernels $k_n:\Gamma_n\times\Gamma_n\to [0,\infty)$ such that
\begin{equation}\label{EQ:CE double control}
\rho_-(d_n(x,y)) \le k_n(x,y) \le\rho_+(d_n(x,y)) \quad \text{for any} \quad n\in \NN \quad \text{and} \quad x,y\in \Gamma_n.
\end{equation}
For each \(n\in\mathbb N\), define
\[
\varphi_n(g)
:=
\frac{1}{|\Gamma_n|}
\sum_{x\in\Gamma_n}k_n(x,xg)
\quad \text{for} \quad  g\in\Gamma_n.
\]
We claim that \(\varphi_n\) is of negative type. Indeed, \(\varphi_n(1_{\Gamma_n})=0\) and \(\varphi_n(g^{-1})=\varphi_n(g)\). Moreover, for any \(g_1,\ldots,g_m\in\Gamma_n\) and \(a_1,\ldots,a_m\in\mathbb R\) with \(\sum_i a_i=0\), we have
\[
\sum_{i,j=1}^m a_i a_j\varphi_n(g_i^{-1}g_j)=\frac{1}{|\Gamma_n|}\sum_{x\in\Gamma_n} \sum_{i,j=1}^m a_i a_j
k_n(xg_i,xg_j)\le0.
\]
Furthermore, since \(d_n(x,xg)=\ell_n(g)\), we obtain from \eqref{EQ:CE double control} that
\[
\rho_-(\ell_n(g))\le\varphi_n(g)\le\rho_+(\ell_n(g)) \quad \text{for any} \quad n\in\NN \quad \text{and} \quad g\in\Gamma_n.
\]
Applying Lemma~\ref{lem:filling lemma}, we conclude that \(G_{\mathcal N}(\Gamma)\) admits a continuous proper function of negative type. Hence \(G_{\mathcal N}(\Gamma)\) is a-T-menable.

``(2) $\Rightarrow$ (3)'' holds trivially since every continuous function is controlled.

``(3) $\Rightarrow$ (1)'': Suppose that $G_{\mathcal N}(\Gamma)$ is weakly a-T-menable. Let $\psi:G_{\mathcal N}(\Gamma) \to [0,\infty)$ be a proper and controlled function of negative type. By Lemmas~\ref{lem:length function} and \ref{lem:comparison of length functions}, there exist non-decreasing functions \(\rho_-,\rho_+:[0,\infty)\to[0,\infty)\) with \(\rho_-(t)\to\infty\) as $t\to \infty$, such that
\[
\rho_-(\hat\ell(\gamma)) \le\psi(\gamma) \le\rho_+(\hat\ell(\gamma)) \quad \text{for any} \quad \gamma\in G_{\mathcal N}(\Gamma).
\]
For each \(n\in\NN\), denote by \(\psi_n\) the restriction of \(\psi\) to \(\Gamma_n\) and set
\[
k_n(x,y):=\psi_n(x^{-1}y) \quad \text{for} \quad x,y\in\Gamma_n.
\]
Clearly, \(k_n\) is a negative-type kernel satisfying
\[
\rho_-(d_n(x,y))\le k_n(x,y)\le\rho_+(d_n(x,y)) \quad \text{for} \quad x,y\in\Gamma_n.
\]
By \cite[Theorem 11.16]{Roe03_book}, $\boxspace_{\maN}(\Gamma)$ is coarsely embeddable into Hilbert space. 
\end{proof}

\begin{rem}
Note that the proof of ``(1) $\Rightarrow$ (2)'' above follows the outline of that for \cite[Proposition 11.26]{Roe03_book}, where Roe showed that the coarse embeddability of the box space $\boxspace_{\maN}(\Gamma)$ implies a-T-menability of the group $\Gamma$. Here we go further to a-T-menability for the whole HLS groupoid \(G_{\maN}(\Gamma)\) with the help of Lemma~\ref{lem:filling lemma}.
\end{rem}

We provide the following tool to construct a-T-menable HLS groupoids using the homology-cover construction from \cite{AGS12}, further developed in \cite{Khu14}. For a group $H$, denote its square refinement $H^{(2)}:=\langle h^2:h\in H\rangle$.

\begin{prop}\label{lem:square refinement}
Let $F$ be a finitely generated free group and let $\maN=\{N_n\}_{n\in\NN}$ be an approximating sequence. Then $\maN^{(2)}:=\{N_n^{(2)}\}_{n\in\NN}$ is an approximating sequence and $\boxspace_{\maN^{(2)}}(F)$ is coarsely embeddable into Hilbert space. Hence, $G_{\maN^{(2)}}(F)$ is a-T-menable. 
\end{prop}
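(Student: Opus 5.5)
The proof has three parts: check that $\maN^{(2)}$ is an approximating sequence, obtain coarse embeddability of the box space from the homology-cover results, and conclude with Theorem~\ref{thm:CE iff aTmenable}.

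\textbf{Step 1: $\maN^{(2)}$ is an approximating sequence.} Each $N_n$ has finite index in the free group $F$, so by Nielsen--Schreier it is free of finite rank $r_n$. The subgroup $N_n^{(2)}$ is characteristic in $N_n$. Since $N_n$ is normal in $F$, it follows that $N_n^{(2)}$ is normal in $F$.

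The quotient $N_n/N_n^{(2)}$ is isomorphic to $(\ZZ/2)^{r_n}$, which is finite. Therefore $N_n^{(2)}$ has finite index in $F$.

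The sequence is decreasing: $N_{n+1}\subseteq N_n$ implies $N_{n+1}^{(2)}\subseteq N_n^{(2)}$, because squares of elements of $N_{n+1}$ lie in $N_n$. It also has trivial intersection, since $N_n^{(2)}\subseteq N_n$ and $\bigcap_n N_n$ is trivial.

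\textbf{Step 2: coarse embeddability of $\boxspace_{\maN^{(2)}}(F)$.} Here I would invoke the homology-cover theorem of Arzhantseva--Guentner--\v{S}pakula \cite{AGS12}, in the general form of Khukhro \cite{Khu14}.

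Geometrically, $F/N_n^{(2)}$ is the Cayley graph of the $\ZZ/2$-homology cover of the finite graph $X_n=\mathrm{Cay}(F/N_n)$. The key input from \cite{AGS12} is that such covers embed into $\ell^1$ with uniform distortion, via a wall structure. Concretely, each edge of $X_n$ not in a fixed spanning tree gives a $\ZZ/2$-coordinate, and these coordinates give walls on the cover. The induced wall metric is comparable to the graph metric at all scales up to the girth of $X_n$, which tends to infinity.

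Combining this with the fact that $\ell^1$ embeds coarsely (indeed via negative-type kernels) into Hilbert space yields negative-type kernels $k_n$ on $F/N_n^{(2)}$ with uniform bounds $\rho_\pm$. By \cite[Theorem 11.16]{Roe03_book}, these kernels give a coarse embedding of the coarse disjoint union $\boxspace_{\maN^{(2)}}(F)$ into Hilbert space.

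I expect this step to be the main obstacle if proved from scratch. Proving that the uniform wall structure controls distances at every scale is the technical heart of \cite{AGS12}. I would therefore cite it directly, noting that \cite{Khu14} removes the chain/quotient assumptions needed for an arbitrary approximating sequence of $F$.

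\textbf{Step 3: conclusion.} With Step 2 in hand, the a-T-menability of $G_{\maN^{(2)}}(F)$ follows immediately from the implication ``(1) $\Rightarrow$ (2)'' of Theorem~\ref{thm:CE iff aTmenable}, applied to the approximating sequence $\maN^{(2)}$.
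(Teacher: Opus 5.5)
Your proposal is correct and follows essentially the same route as the paper. The paper makes the same check that $\maN^{(2)}$ is approximating (finite generation of $N_n$, characteristic subgroups) and uses the same $\ZZ/2$-homology-cover wall metrics of \cite{AGS12}, which agree with the graph metric below $\operatorname{girth}(X_n)\to\infty$ and so give uniformly controlled negative-type kernels for \cite[Theorem~11.16]{Roe03_book}, before concluding with Theorem~\ref{thm:CE iff aTmenable}. One small correction to your heuristic description: in \cite{AGS12} there is a wall for \emph{every} edge $e$ of $X_n$ (the full preimage of $e$ in the cover), not only for the edges outside a spanning tree, since those only coordinatise the deck group $N_n/N_n^{(2)}$ and on their own would give a pseudometric vanishing across lifts of tree edges.
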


\begin{proof}
Since \(N_n\) is finitely generated, \(N_n/N_n^{(2)}\) is finite. Moreover, \(N_n^{(2)}\) is characteristic in \(N_n\) and \(N_{n+1}^{(2)}\subseteq N_n^{(2)}\subseteq N_n\). Thus, \(\mathcal N^{(2)}\) is an approximating sequence.

Fix a finite free generating set of $F$ with word length $\ell$. Let $X_n$ and $\tilde X_n$ be the corresponding Cayley graphs of $F/N_n$ and $F/N_n^{(2)}$, respectively, and let $\tilde d_n$ be the graph metric on $\tilde X_n$. 
As in \cite[Section~3 and 4]{AGS12} (in particular, Propositions~3.4 and 4.3), \(\tilde X_n\) is the \(\ZZ/2\ZZ\)-homology cover of $X_n$, equipped with a left-invariant wall metric $d_{W,n}$ of negative type satisfying $d_{W,n}\le\tilde d_n$. 


By \cite[Lemma~4.4]{AGS12}, $\operatorname{girth}(X_n)\to\infty$. Moreover, \cite[Proposition~3.11]{AGS12} shows that $d_{W,n}(x,y)=\tilde d_n(x,y)$ whenever $d_{W,n}(x,y)<\operatorname{girth}(X_n)$ for $x,y\in \tilde{X}_n$. Thus, by the argument of \cite[Proposition~4.5]{AGS12}, there exists a non-decreasing function \(\rho_-:[0,\infty)\to[0,\infty)\) with \(\rho_-(t)\to\infty\) as $t\to \infty$ satisfying
\begin{equation}\label{EQ:square control}
\rho_-(\tilde d_n(x,y)) \le d_{W,n}(x,y) \le \tilde d_n(x,y) \quad \text{for any} \quad n\in \NN \quad \text{and} \quad x,y \in\tilde X_n.
\end{equation}
As in \cite[Proposition~4.3]{AGS12}, these wall metrics $d_{W,n}$ give an effective negative-type kernel on $\boxspace_{\maN^{(2)}}(F)$. Hence, $\boxspace_{\maN^{(2)}}(F)$ coarsely embeds into Hilbert space by \cite[Theorem~11.16]{Roe03_book}. Consequently, $G_{\maN^{(2)}}(F)$ is a-T-menable by Theorem~\ref{thm:CE iff aTmenable}.
\end{proof}

\begin{rem}\label{rem:easy proof}
In the setting of Proposition~\ref{lem:square refinement}, we can prove directly that $G_{\maN^{(2)}}(F)$ is a-T-menable, without invoking the filling procedure of Lemma~\ref{lem:filling lemma}. Indeed, for each $n\in \NN$, set $\varphi_n(x):=d_{W,n}(1_{F/N_n^{(2)}},x)$ for $x\in F/N_n^{(2)}$, and let \(\tilde\pi_n:F\to F/N_n^{(2)}\) be the canonical quotient map. For each $g\in F$, Lemma~\ref{lem:local isom} and the preceding local coincidence of $d_{W,n}$ and $\tilde d_n$ show that $\varphi_n(\tilde\pi_n(g))=\ell(g)$ for sufficiently large $n$.
%

Now define \(\varphi:G_{\maN^{(2)}}(F) \to [0,\infty)\) by $\varphi(n,x)=\varphi_n(x)$ for $n\in \NN$ and $\varphi(\infty,g)=\ell(g)$. Then \(\varphi\) is continuous and of negative type. Furthermore, \eqref{EQ:square control} implies that 
\[
\rho_-(\hat\ell(\gamma)) \leq\varphi(\gamma)\leq \hat\ell(\gamma)
\quad \text{for} \quad  \gamma\in G_{\mathcal N^{(2)}}(F),
\]
where \(\hat\ell\) is the function associated with \(\mathcal N^{(2)}\) in Lemma~\ref{lem:length function}. Since \(\hat\ell\) is proper by Lemma~\ref{lem:length function}, the function \(\varphi\) is proper. Thus, \(G_{\mathcal N^{(2)}}(F)\) is a-T-menable. 
\end{rem}

\begin{ex}\label{ex:AGS}
Arzhantseva, Guentner and \v{S}pakula \cite{AGS12} constructed the first example of a bounded geometry metric space that coarsely embeds into Hilbert space but does not have property A. Their example is the box space of the free group $\mathbb{F}_2$ associated with the approximating sequence $N_0=\mathbb{F}_2$ and $N_{n+1}=N_n^{(2)}$. By Theorem~\ref{thm:CE iff aTmenable}, the associated HLS groupoid is a-T-menable. Moreover, Remark~\ref{rem:easy proof} explains that the negative-type functions arising from the wall metrics in \cite{AGS12} directly yield a continuous proper function of negative type on this groupoid, without invoking the filling procedure of Lemma~\ref{lem:filling lemma}.
\end{ex}

Combining Corollary~\ref{cor:refinement preserves WCP} and Proposition~\ref{lem:square refinement}, we obtain the following:

\begin{cor}\label{cor:yes+yes}
Let $\Gamma$ be a finitely generated free group and $\maN=\{N_n\}_{n\in\NN}$ be an approximating sequence. If $G_{\mathcal N}(\Gamma)$ is metrically amenable, then $G_{\mathcal{N}^{(2)}}(\Gamma)$ is both a-T-menable and metrically amenable for $\mathcal{N}^{(2)}:=\{N_n^{(2)}\}_n$.
\end{cor}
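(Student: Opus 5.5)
The plan is to treat the two properties separately, since each has already been reduced to an earlier result.

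For a-T-menability, Proposition~\ref{lem:square refinement} applies to any approximating sequence $\maN$ of the finitely generated free group $\Gamma$. It gives two things at once: $\maN^{(2)}$ is again an approximating sequence, and $G_{\maN^{(2)}}(\Gamma)$ is a-T-menable. No hypothesis on metric amenability is used here.

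For metric amenability, I will apply Corollary~\ref{cor:refinement preserves WCP} with $\mathcal M:=\maN^{(2)}$ and the given $\maN$. Its hypothesis asks that for every $n$ there is some $m$ with $M_m\subseteq N_n$. Taking $m=n$ works, because $N_n^{(2)}=\langle h^2:h\in N_n\rangle$ is a subgroup of $N_n$. Both $\maN$ and $\maN^{(2)}$ are approximating sequences, the latter by Proposition~\ref{lem:square refinement}. So the corollary says that metric amenability of $G_{\maN}(\Gamma)$ passes to $G_{\maN^{(2)}}(\Gamma)$.

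Putting the two conclusions together gives the statement. There is no real obstacle: the only point to check is the inclusion $N_n^{(2)}\subseteq N_n$, which holds by definition.
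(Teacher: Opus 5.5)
Your proposal is correct and follows the paper's own argument: the paper obtains this corollary directly by combining Proposition~\ref{lem:square refinement}, which gives that $\maN^{(2)}$ is an approximating sequence with a-T-menable HLS groupoid, and Corollary~\ref{cor:refinement preserves WCP}, applied with $m=n$ via the inclusion $N_n^{(2)}\subseteq N_n$. Your write-up also makes explicit that the metric amenability hypothesis is used only for the second property.
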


\begin{ex}\label{ex:both properties}
Let $F$ be the free group $\mathbb{F}_2$. For each $n \in \NN$, set 
\begin{equation}\label{EQ:Wn}
 W_n=\bigcap\left\{\ker\phi ~\big|~ \phi:F\to \Gamma \text{ a group homomorphism with } |\Gamma|\le n\right\}.
\end{equation}
By \cite[Lemma 2.8]{Wil15}, $\mathcal{W}:=\{W_n\}_n$ is an approximating sequence and $G_{\mathcal W}(F)$ is metrically amenable. For $\mathcal V:=\mathcal{W}^{(2)}= \{W_n^{(2)}\}_{n\in\NN}$, Corollary~\ref{cor:yes+yes} shows that $G_{\mathcal V}(F)$ is both a-T-menable and metrically amenable.
\end{ex}

\section{a-T-menability vs metric amenability}\label{sec:Examples}

In this section, we compare a-T-menability with metric amenability for HLS groupoids. We will use property (T) and property $(\tau)$ as key tools. 

\subsection{Property (T) and property \texorpdfstring{$(\tau)$}{(tau)}}
Let $\Gamma$ be a finitely generated group and $S$ a finite symmetric generating set. Let $\pi:\Gamma \to \U(\H)$ be a unitary representation, where $\H$ is a Hilbert space and $\U(\H)$ denotes the unitary operators on $\H$. The associated \emph{normalised Laplacian} is 
\[
 \Delta_{\pi}=\Id_\H-\frac1{|S|}\sum_{s\in S}\pi(s).
\]
For a sequence $\mathcal N=\{N_n\}_{n\in\NN}$ of finite-index normal subgroups in $\Gamma$ (which is not necessarily approximating), say that $\Gamma$ has \emph{property $(\tau)$ with respect to $\maN$} if there is $\varepsilon>0$ such that $\Delta_\pi\ge\varepsilon(\Id_{\H}-P_\pi)$ for any unitary representation $\pi$ factoring through some $\Gamma/N_n$. Here $P_\pi$ is the projection onto $\pi$-invariant vectors (\emph{i.e.}, $\xi\in \H$ satisfying $\pi(g)\xi=\xi$ for any $g\in \Gamma$). Requiring the same estimate for all unitary representations gives the definition of \emph{property (T)}. See \cite{BHVBook,LA94} for more details.

The next observation follows from \cite[Propositions~4.11 and 4.15]{DW22}. We include a short proof for the convenience of the reader.

\begin{prop}\label{prop:tau metric T}
Let $\Gamma$ be a finitely generated group and $\maN$ an approximating sequence. If $G_{\maN}(\Gamma)$ is metrically amenable, then $\Gamma$ has property $(\tau)$ with respect to $\maN$ \emph{if and only if} $\Gamma$ has property (T).
\end{prop}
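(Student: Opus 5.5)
One direction is immediate. Property (T) asks for the spectral-gap estimate $\Delta_\pi\ge\varepsilon(\Id_\H-P_\pi)$ for every unitary representation. In particular it holds for those factoring through some $\Gamma/N_n$, so property (T) implies property $(\tau)$ with respect to $\maN$. This direction does not use metric amenability at all.

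For the converse, assume $\Gamma$ has property $(\tau)$ with respect to $\maN$ with constant $\varepsilon>0$, and that $G_{\maN}(\Gamma)$ is metrically amenable. I will work with the self-adjoint element
\[
h:=\frac1{|S|}\sum_{s\in S}s\in\CC[\Gamma],
\]
so that $\Delta_\pi=\Id-\pi(h)$. The plan has three steps.

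\textbf{Step 1: finite quotients.} Each $\tilde\lambda_{\Gamma/N_n}$ factors through $\Gamma/N_n$. Its invariant vectors are the constants, and they span the trivial subrepresentation. On the orthogonal complement $\ell^2_0(\Gamma/N_n)$, property $(\tau)$ gives $\tilde\lambda_{\Gamma/N_n}(h)\le 1-\varepsilon$, and of course $\tilde\lambda_{\Gamma/N_n}(h)\ge -1$.

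\textbf{Step 2: a polynomial that detects the gap.} I will choose a real polynomial $p$ with $p(1)=1$ and $|p(t)|\le 1/2$ for $t\in[-1,1-\varepsilon]$. For example, take a high power of a suitably rescaled Chebyshev-type polynomial, or more simply $p(t)=\bigl((1+t)/2\bigr)^k$ with $k$ large. This second choice works because $(1+t)/2\in[0,1-\varepsilon/2]$ on $[-1,1-\varepsilon]$. Set $x:=p(h)$ and let $e\in C^*_{\max}(\Gamma)$ denote the image of the trivial representation. In each $\tilde\lambda_{\Gamma/N_n}$ the element $x$ acts as $1$ on constants and has norm at most $1/2$ on $\ell^2_0$.

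\textbf{Step 3: transfer to the maximal algebra.} The aim is to show that the trivial representation is isolated from every representation without invariant vectors. Let $\sigma$ be the direct sum of the $\tilde\lambda_{\Gamma/N_n}$. Lemma~\ref{lem:willett criterion} says that $\sigma$ is faithful, indeed isometric, on $\CC[\Gamma]$ for the maximal norm, so $\sigma$ extends to a faithful representation of $C^*_{\max}(\Gamma)$. By Step 1 the spectrum of $\sigma(h)$ lies in $[-1,1-\varepsilon]\cup\{1\}$. Spectra are preserved under faithful $*$-homomorphisms, so the spectrum of $h$ in $C^*_{\max}(\Gamma)$ has the same gap.

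\textbf{Conclusion.} For any unitary representation $\pi$ of $\Gamma$, the spectrum of $\pi(h)$ is contained in that of $h$, hence in $[-1,1-\varepsilon]\cup\{1\}$. The spectral projection of $\pi(h)$ onto $\{1\}$ is exactly the projection onto vectors with $\pi(s)\xi=\xi$ for all $s\in S$. This is because $\pi(h)\xi=\xi$ together with $\|\pi(s)\xi\|=\|\xi\|$ forces $\pi(s)\xi=\xi$ by strict convexity. Since $S$ generates $\Gamma$, these are the $\Gamma$-invariant vectors, so this projection is $P_\pi$. On the complement of $P_\pi$ we get $\Delta_\pi\ge\varepsilon$, which is property (T). This makes the polynomial of Step 2 unnecessary; it remains only as an alternative route.

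The main subtle point is the spectral transfer in Step 3. One needs that Lemma~\ref{lem:willett criterion} gives equality of norms on all of $\CC[\Gamma]$, so that $\sigma$ is an isometric $*$-homomorphism on the whole completion $C^*_{\max}(\Gamma)$ and not merely bounded on generators. Once that is in place, spectral permanence in $C^*$-algebras does the rest.
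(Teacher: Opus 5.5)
Your proof is correct and follows essentially the same route as the paper: Lemma~\ref{lem:willett criterion} makes $\bigoplus_n\tilde\lambda_{\Gamma/N_n}$ faithful on $C^*_{\max}(\Gamma)$, the spectral gap (in your normalisation $[-1,1-\varepsilon]\cup\{1\}$ for $h=1-\Delta$) transfers to $C^*_{\max}(\Gamma)$ and hence to every $\pi$, and $\ker\Delta_\pi$ is identified with the $\pi$-invariant vectors. The polynomial in Step~2 is, as you note yourself, superfluous and can be dropped.
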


\begin{proof}
It suffices to prove the necessity. Let $S$ be a finite symmetric generating set of $\Gamma$ and denote $\Delta:=1-|S|^{-1}\sum_{s\in S}s\in C^*_{\max}(\Gamma)$. By Lemma~\ref{lem:willett criterion}, the representation $\Lambda=\bigoplus_n\tilde{\lambda}_{\Gamma/N_n}$ on $C^*_{\max}(\Gamma)$ is faithful. Property $(\tau)$ gives $\varepsilon>0$ such that the spectrum of $\Lambda(\Delta)$ is contained in $\{0\}\cup[\varepsilon,2]$. Since $\Lambda$ is faithful, the same spectral inclusion holds for $\Delta$ in $C^*_{\max}(\Gamma)$, and therefore for $\pi(\Delta)$ in every unitary representation $\pi$. As \(\ker\Delta_\pi\) is precisely the space of \(\pi\)-invariant vectors, we obtain $\Delta_\pi\ge\varepsilon(\Id-P_\pi)$ for every $\pi$. This shows property (T) for $\Gamma$.
\end{proof}

\begin{cor}\label{cor:tau obstruction}
Let $\Gamma$ be a finitely generated group with property $(\tau)$ relative to an approximating sequence $\maN$ and $|\Gamma| = \infty$. Then $G_{\maN}(\Gamma)$ is not a-T-menable. If additionally, $\Gamma$ does not have property (T), then $G_{\maN}(\Gamma)$ is not metrically amenable.
\end{cor}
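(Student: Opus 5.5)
The second assertion is immediate from Proposition~\ref{prop:tau metric T}. If $G_{\maN}(\Gamma)$ were metrically amenable, then property $(\tau)$ with respect to $\maN$ would force $\Gamma$ to have property (T), which contradicts the extra hypothesis. So the real content is the first assertion, and I will argue it by contradiction using Theorem~\ref{thm:CE iff aTmenable}.

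Suppose $G_{\maN}(\Gamma)$ is a-T-menable. By Theorem~\ref{thm:CE iff aTmenable}, the box space $\boxspace_{\maN}(\Gamma)$ then coarsely embeds into Hilbert space. Fix a finite symmetric generating set $S$ and the word length.

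First, property $(\tau)$ with respect to $\maN$ makes the Cayley graphs of $\Gamma_n=\Gamma/N_n$ a family with a uniform spectral gap. Apply $\Delta_\pi\ge\varepsilon(\Id-P_\pi)$ to the representation $\tilde\lambda_{\Gamma/N_n}$ on $\ell^2(\Gamma_n)$. Its invariant vectors are the constants, so the normalised Laplacian of the Cayley graph has spectral gap at least $\varepsilon$, uniformly in $n$.

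Second, because $\Gamma$ is infinite and $\bigcap_n N_n$ is trivial, the indices $[\Gamma:N_n]$ are unbounded. Indeed, if they were bounded, the decreasing sequence $N_n$ would stabilise at a finite-index subgroup, which would then be trivial, forcing $\Gamma$ to be finite. Hence $|\Gamma_n|\to\infty$, and the graphs have uniformly bounded degree $|S|$. Together with the first step, the box space is a coarse disjoint union of an expander family.

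Third, I use the standard fact that expanders do not coarsely embed into Hilbert space. The argument starts from a coarse embedding $f$ normalised so that each $f|_{\Gamma_n}$ has mean zero. The Poincaré inequality coming from the spectral gap gives
\[
\sum_{x,y\in\Gamma_n}\|f(x)-f(y)\|^2\le \frac{C|\Gamma_n|}{\varepsilon}\sum_{x\in \Gamma_n,\, s\in S}\|f(x)-f(xs)\|^2\le C'|\Gamma_n|^2\rho_+(1)^2.
\]
On the other hand, by bounded geometry most pairs in $\Gamma_n$ are at distance roughly $\log|\Gamma_n|$. So the left-hand side is at least $c|\Gamma_n|^2\rho_-(c\log|\Gamma_n|)^2$, which tends to infinity relative to $|\Gamma_n|^2$. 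This contradiction shows that $G_{\maN}(\Gamma)$ is not a-T-menable.

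The main obstacle is this third step. I would cite it from \cite{Roe03_book} or \cite{NY12} rather than reprove it, taking care that the Poincaré inequality is used for Hilbert-space-valued maps, which follows coordinatewise. Everything else is bookkeeping with the theorem and the proposition already established.
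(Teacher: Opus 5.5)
Your proposal is correct and follows the same route as the paper. The second assertion comes straight from Proposition~\ref{prop:tau metric T}, and the first from Theorem~\ref{thm:CE iff aTmenable}, using that property $(\tau)$ makes the quotient Cayley graphs an expander family with $|\Gamma_n|\to\infty$, which cannot coarsely embed into Hilbert space; you additionally spell out the unboundedness of the indices and the standard Poincar\'e-inequality argument, which the paper simply cites.
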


\begin{proof}
Property $(\tau)$ makes the quotient Cayley graphs an expander family \cite[Theorem~4.3.2]{LA94}, and note that their sizes tend to infinity because $\Gamma$ is infinite. Hence, their coarse disjoint union cannot coarsely embed into Hilbert space. Theorem~\ref{thm:CE iff aTmenable} implies the first assertion and Proposition~\ref{prop:tau metric T} implies the second.
\end{proof}


Corollary~\ref{cor:tau obstruction} can be applied to produce HLS groupoids which are neither a-T-menable nor metrically amenable. We provide two such examples. 

\begin{ex}\label{ex:F2 non+non}
Let $F$ be the subgroup of $\SL_2(\ZZ)$ generated by
\[
A=\begin{pmatrix}1&2\\0&1\end{pmatrix},
\qquad
B=\begin{pmatrix}1&0\\2&1\end{pmatrix}.
\]
By Sanov's theorem, \(F\) is a free group of rank two and has index \(12\) in \(\SL_2(\ZZ)\) (see, \emph{e.g.}, \cite[Theorems~1 and~2]{CGS17}). For $m\ge1$, let
\[
\Gamma(m):=\ker\bigl( \SL_2(\ZZ)\longrightarrow\SL_2(\ZZ/m\ZZ) \bigr).
\]
By Selberg's theorem (see \cite[Example~4.3.3(D) and Section~4.4]{LA94}) and the finite-index inheritance of property $(\tau)$ (see \cite[Theorem~8]{AE12}), 
$F$ has property $(\tau)$ with respect to the family $\{F\cap\Gamma(m)\}_{m\in \NN}$. Now set
\[
C_n:=F\cap\Gamma(2^n)
\quad \text{and} \quad
\mathcal C=\{C_n\}_{n\in \NN}.
\]
Then $\mathcal C$ is an approximating sequence of \(F\) consisting of congruence subgroups, and $F$ has property $(\tau)$ with respect to $\mathcal C$. Since $F\cong\mathbb F_2$ does not have property (T), Corollary~\ref{cor:tau obstruction} shows that $G_{\mathcal C}(F)$ is neither a-T-menable nor metrically amenable.
\end{ex}


We next give an example of a group having property $(\tau)$ with respect to every approximating sequence.

\begin{ex}\label{ex:SL non+non}
Let $\Gamma=\SL_2(\ZZ[1/p])$, where $p$ is a prime number. By \cite[Example~4.3.3(E)]{LA94}, $\Gamma$ has property $(\tau)$ with respect to any approximating sequence. On the other hand, $\Gamma$ has the Haagerup property since it is a
countable subgroup of $\mathrm{GL}_2(\mathbb Q)$ (see \cite[Theorem~4]{GHW05}), and hence does not have property (T). Therefore, Corollary~\ref{cor:tau obstruction} implies that for any approximating sequence $\mathcal N$ of $\Gamma$, the associated HLS groupoid $G_{\mathcal N}(\Gamma)$ is neither a-T-menable nor metrically amenable.
\end{ex}

\subsection{Metric amenability does not imply a-T-menability}

Examples~\ref{ex:F2 non+non} and \ref{ex:SL non+non} show that property $(\tau)$ obstructs a-T-menability. However, for groups without property (T), it also rules out metric amenability by Proposition~\ref{prop:tau metric T}. To separate these two properties, we now construct an example using the idea of relative expansion from \cite{AT15}.

Throughout this subsection, we will use the following notation. Let $F:=\FF(a,b)$ be the free group generated by two elements $a,b$, and $S:=\{a,a^{-1},b,b^{-1}\}$ be the generating set. Denote
\[
 q:F \longrightarrow \SL_3(\ZZ) \quad \text{by} \quad 
 q(a)=\I_3+E_{12} \quad \text{and} \quad 
 q(b)=\begin{pmatrix}0&0&1\\1&0&0\\0&1&0\end{pmatrix}.
\]
Here $\I_3$ is the $3$-by-$3$ identity matrix and $E_{ij}$ is the matrix with $1$ in the $(i,j)$-position and $0$ otherwise, for $1 \leq i,j \leq 3$. It is routine to check that $q$ is surjective. Given a prime number $p$, denote $\FF_p:=\ZZ/p\ZZ$, $Q_p:=\SL_3(\FF_p)$. Let \(q_p:F\to Q_p\) be the composition of \(q\) with reduction modulo \(p\).


For each $i \in \NN$, set $W_i$ as in \eqref{EQ:Wn}. Starting with $M_0:=F$, we define recursively a sequence of subgroups $\{M_i\}_i$ of $F$ as follows. Setting $U_i:=W_i\cap M_{i-1}$, choose a prime number $p_i > 2|F/U_i|$ and set $M_i:=U_i\cap\ker q_{p_i}$.

\begin{thm}\label{thm:counterexample}
The above chain $\M=\{M_i\}_i$ is an approximating sequence of $F$ such that the associated HLS groupoid $G_{\M}(F)$ is metrically amenable but not a-T-menable.
\end{thm}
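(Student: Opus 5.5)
We treat the three assertions in turn: that $\M$ is an approximating sequence, that $G_{\M}(F)$ is metrically amenable, and that it is not a-T-menable.

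\emph{Approximating sequence and metric amenability.} Each $M_i$ is an intersection of finite-index normal subgroups ($W_i$, $M_{i-1}$ and $\ker q_{p_i}$), so it is normal of finite index. By construction $M_i\subseteq U_i\subseteq M_{i-1}$, so the sequence is decreasing. Since $M_i\subseteq W_i$ and $\bigcap_i W_i$ is trivial by \cite[Lemma 2.8]{Wil15}, the intersection $\bigcap_i M_i$ is trivial. The same containment $M_i\subseteq W_i$ shows that $\M$ refines $\mathcal W$. As $G_{\mathcal W}(F)$ is metrically amenable by \cite[Lemma 2.8]{Wil15}, Corollary~\ref{cor:refinement preserves WCP} gives metric amenability of $G_{\M}(F)$.

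\emph{Failure of a-T-menability.} By Theorem~\ref{thm:CE iff aTmenable} it suffices to show that $\boxspace_{\M}(F)$ does not coarsely embed into Hilbert space. The plan is to exhibit relative expansion in the sense of \cite{AT15}. Set $G_i:=F/M_i$ and $K_i:=U_i/M_i\trianglelefteq G_i$. Since $M_i=U_i\cap\ker q_{p_i}$, the map $q_{p_i}$ identifies $K_i$ with a subgroup $q_{p_i}(U_i)$ of $Q_{p_i}=\SL_3(\FF_{p_i})$ of index at most $|F/U_i|<p_i/2$. The key step is a uniform spectral gap, which I expect is the content of Corollary~\ref{cor:spetral gap}: there is $\varepsilon>0$ such that
\[
\lambda_{G_i}(\Delta)\ge\varepsilon\,(\Id-P_i)\quad\text{on }\ell^2(G_i),
\]
where $P_i$ is the projection onto the $K_i$-invariant vectors, that is, the functions pulled back from $F/U_i$.

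To prove this gap, decompose $\ell^2(G_i)=\ell^2(G_i)^{K_i}\oplus(\ell^2(G_i)^{K_i})^\perp$. Any vector in the second summand generates, under $K_i$, only nontrivial irreducible representations of $K_i$. The step I expect to be hardest is to transfer this to $F$. Consider the induced representation $\operatorname{Ind}_{K_i}^{G_i}$, or equivalently restrict to $U_i$ and use the surjection $q:F\to\SL_3(\ZZ)$. We want to show that a representation of $F$ with no $U_i$-invariant vectors, and which factors through $G_i\hookrightarrow F/U_i\times Q_{p_i}$, has spectral gap controlled by property (T) of $\SL_3(\ZZ)$. The argument runs as follows.
\begin{enumerate}
\item Any irreducible representation of $Q_{p_i}$ whose restriction to $q_{p_i}(U_i)$ contains the trivial representation must itself have small dimension. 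By Frobenius reciprocity, its dimension is at most $[Q_{p_i}:q_{p_i}(U_i)]<p_i/2$.
\item Nontrivial irreducible representations of $\SL_3(\FF_p)$ have dimension at least roughly $p^2$, by the Landazuri--Seitz bounds. Hence the $Q_{p_i}$-part of any such representation is trivial.
\item The $F$-representations in question therefore factor through $\SL_3(\ZZ)$ with no invariant vectors. Property (T) of $\SL_3(\ZZ)$, pulled back via the surjection $q$ with generators $q(S)$, then gives a uniform Kazhdan constant, and hence $\varepsilon$.
\end{enumerate}
The delicate point is to arrange the bookkeeping on the fibre product $G_i\subseteq F/U_i\times Q_{p_i}$ so that orthogonality to $K_i$-invariants really forces the nontrivial $Q_{p_i}$-isotypic part.

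\emph{From the spectral gap to non-embeddability.} With the gap in hand, we follow \cite{AT15}. Suppose $f$ were a coarse embedding and take the negative-type kernels of \cite[Theorem 11.16]{Roe03_book}. Averaging as in the proof of Theorem~\ref{thm:CE iff aTmenable} gives negative-type functions $\varphi_i$ on $G_i$ with $\rho_-(\ell_i)\le\varphi_i\le\rho_+(\ell_i)$. The gap yields a Poincaré-type inequality
\[
\sum_{x}\|f(x)-Pf(x)\|^2\le C\sum_{x,s}\|f(x)-f(xs)\|^2,
\]
where $Pf$ is the average of $f$ over $K_i$-cosets. Hence the average of $\varphi_i$ over $K_i$ is bounded uniformly in $i$. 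However, $|K_i|\ge|Q_{p_i}|/|F/U_i|\to\infty$, and $K_i\cap B(1,R)=\{1\}$ for large $i$ by Lemma~\ref{lem:local isom}, since $U_i\subseteq M_{i-1}$. So most elements of $K_i$ have length tending to infinity, and properness via $\rho_-$ forces their $\varphi_i$-values to infinity, which is a contradiction.
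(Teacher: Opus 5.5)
\paragraph{Verdict: genuine gap in the relative spectral gap (step (3)).}
Your proofs that $\M$ is an approximating sequence and that $G_{\M}(F)$ is metrically amenable are correct and coincide with the paper's. Your final step is also sound: a relative spectral gap gives a uniformly bounded average of $\varphi_i$ over $K_i$, which contradicts $|k|_{G_i}\to\infty$ for $k\in K_i\setminus\{1\}$. The paper runs the same averaging directly on the restriction $\psi_i$ of a continuous proper negative-type function via its GNS cocycle, rather than through Theorem~\ref{thm:CE iff aTmenable}, but that difference is cosmetic.

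The gap is in your proof of the relative spectral gap, specifically step (3), which is false. Your Frobenius-reciprocity argument in (1)--(2) actually shows $q_{p_i}(U_i)=Q_{p_i}$, which is Lemma~\ref{lem:finite group prodduct}. Hence $G_i\cong Q_{p_i}\times F/U_i$ and $K_i=Q_{p_i}\times\{1\}$. The orthogonal complement of the $K_i$-invariant vectors is therefore a sum of irreducibles $\rho\otimes\overline{\sigma}$, where $\rho$ is a \emph{non-trivial} irreducible representation of $Q_{p_i}$ and $\sigma$ is an \emph{arbitrary} irreducible representation of $F/U_i$. The $Q_{p_i}$-component is trivial exactly on the invariant part, not on its complement.

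These representations do not factor through $q$. Since $U_i\subseteq W_i$, the group $F/U_i$ surjects onto $\ZZ/2\ZZ$ for $i\ge 2$, whereas $\SL_3(\ZZ)$ is perfect. So $\rho\otimes\overline{\sigma}$ with $\sigma$ a sign character is non-trivial on $\ker q$. Property (T) of $\SL_3(\ZZ)$ then tells you nothing about them, because tensoring with an arbitrary $\sigma$ can destroy a spectral gap (for example, $\rho\otimes\overline{\rho}$ contains the trivial representation). Restricting to $U_i$ kills $\sigma$, but it only gives a gap with respect to generating sets of $U_i$. The naive comparison with $\Delta$ then loses a factor that blows up with $i$, since nontrivial elements of $U_i\subseteq M_{i-1}$ are long.

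\paragraph{The missing idea.}
You need quantitative control of this twist, and this is where the choice $p_i>2|F/U_i|$ really enters. By Lemma~\ref{lem:rep of Qp} it gives $\dim\rho\ge p_i-1\ge 2|F/U_i|\ge 2\dim\sigma$; Landazuri--Seitz would also do. The paper then proves Lemma~\ref{lem:tensor spectral gap} as follows.
\begin{enumerate}
\item Realise $\rho\otimes\overline{\sigma}$ as $T\mapsto \rho(g)T\sigma(g)^*$ on $\HS(\H_\sigma,\H_\rho)$, and replace a unit vector $T$ by $B=(TT^*)^{1/2}$.
\item Use the Araki--Yamagami (Powers--St{\o}rmer) inequality $\|\rho(s)B\rho(s)^*-B\|_{\HS}^2\le 2\|\rho(s)T\sigma(s)^*-T\|_{\HS}^2$. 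This bounds the energy of $T$ from below by half the energy of $B$ under $\Ad\rho$.
\item The representation $\Ad\rho$ \emph{does} factor through $q$, so it has gap $\lambda$ off the scalars, which are its only invariant vectors by Schur.
\item Since $\rank B\le\dim\sigma$, one gets $\tr(B)^2/\dim\rho\le \dim\sigma/\dim\rho\le\frac12$, so $B$ is far from the scalars.
\end{enumerate}
This yields $\Delta_{\rho\otimes\overline{\sigma}}\ge\frac\lambda2\bigl(1-\frac{\dim\sigma}{\dim\rho}\bigr)\ge\frac\lambda4$, which is Corollary~\ref{cor:spetral gap}. Without an argument of this kind, step (3) has no replacement, and the spectral gap on which the rest of your proof depends remains unproved.
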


To prove Theorem~\ref{thm:counterexample}, we need some preparations. Let us start with the following elementary fact in group theory.

\begin{lem}\label{lem:finite group prodduct}
Let $U \trianglelefteq F$ be a normal subgroup with finite index and put $R:=F/U$. If $p>|R|$, then the map
\[
F/(U\cap\ker q_p) \longrightarrow Q_p\times R, \quad g(U\cap\ker q_p)\mapsto (q_p(g), gU) \quad \text{for} \quad g\in F
\]
is an isomorphism. 
\end{lem}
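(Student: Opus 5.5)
The map is the homomorphism $F\to Q_p\times R$, $g\mapsto (q_p(g),gU)$, whose kernel is exactly $\ker q_p\cap U$. It therefore induces an injective homomorphism on $F/(U\cap\ker q_p)$, and everything reduces to proving surjectivity onto $Q_p\times R$. The plan is a Goursat-type argument. The image $H\le Q_p\times R$ projects onto both factors, because $q_p$ is surjective ($q$ is surjective and reduction $\SL_3(\ZZ)\to\SL_3(\FF_p)$ is surjective) and $F\to R$ is surjective.

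First set $K:=q_p(U)\trianglelefteq Q_p$. Its index satisfies $[Q_p:K]\le [F:U]=|R|<p$, since $F/U$ surjects onto $Q_p/K$ via $q_p$. Next I would show that every proper normal subgroup of $Q_p=\SL_3(\FF_p)$ has index larger than $p$. Such a subgroup is either contained in the centre $Z$, which has order $\gcd(3,p-1)\le 3$, or equals all of $Q_p$, because $\mathrm{PSL}_3(\FF_p)$ is simple. If it is central, its index is at least $|Q_p|/3$, which is far larger than $p$. Hence $K=Q_p$, i.e.\ $q_p(U)=Q_p$.

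Surjectivity then follows directly. Given $(x,r)\in Q_p\times R$, pick $g\in F$ with $gU=r$. Since $q_p(U)=Q_p$, choose $u\in U$ with $q_p(u)=q_p(g)^{-1}x$. Then $gu$ maps to $(q_p(g)q_p(u),\,guU)=(x,r)$, as required.

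The only nonroutine ingredient is the normal-subgroup structure of $\SL_3(\FF_p)$, namely simplicity of $\mathrm{PSL}_3(\FF_p)$ and the order of the centre. I would cite these as standard rather than prove them. Alternatively, one can note that any subgroup of index $m$ yields a nontrivial action on $m$ points, and the minimal degree of such an action for $\mathrm{PSL}_3(\FF_p)$ is $p^2+p+1>p$; this gives the bound even more directly.
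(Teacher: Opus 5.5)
Your proof is correct. Its skeleton matches the paper's: compute the kernel, then reduce surjectivity to the claim $q_p(U)=Q_p$ via the index bound $[Q_p:q_p(U)]\le|R|<p$. The real difference is how you prove that claim.

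The paper uses only Lagrange's theorem. Each elementary matrix $\I_3+E_{ij}$ has order $p$, so its image in $Q_p/q_p(U)$, a group of order less than $p$, is trivial. Since these matrices generate $Q_p$, the quotient is trivial. You instead invoke the normal-subgroup structure of $\SL_3(\FF_p)$. This gives a much stronger conclusion: a proper normal subgroup has index at least $|Q_p|/3$, so the hypothesis $p>|R|$ could be weakened a great deal. That buys nothing here, because the construction needs $p_i>2|F/U_i|$ anyway for the dimension bound in Corollary~\ref{cor:spetral gap}. It also replaces a two-line argument by a citation of a classification result.

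One step in your version should be tightened. Simplicity of $\mathrm{PSL}_3(\FF_p)$ alone only gives $K\le Z$ or $KZ=Q_p$. A proper $K$ with $KZ=Q_p$ would have index $3$, which is less than $p$ when $3\mid p-1$, so your index bound does not exclude it. To rule it out you need that $\SL_3(\FF_p)$ is perfect, for instance because $[\I_3+E_{ij},\I_3+E_{jk}]=\I_3+E_{ik}$ for distinct $i,j,k$. This quasisimplicity is standard, but it is the fact you are actually using, so state it.

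Your permutation-degree alternative needs the same care, since the action is of $Q_p$ rather than of $\mathrm{PSL}_3(\FF_p)$. The cleanest permutation argument is really the paper's: an element of order $p$ acting on fewer than $p$ cosets fixes them all, so it lies in $q_p(U)$. This version does not even need normality.

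Finally, your explicit lift $gu$ for surjectivity is equivalent to the paper's counting argument, $[F:U\cap\ker q_p]=|R|\cdot|Q_p|$ together with injectivity. Yours is slightly more direct.
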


\begin{proof}
Consider the map $\Phi: F \to Q_p\times R$ defined by $g\mapsto (q_p(g), gU)$. Clearly, $\ker \Phi = U\cap\ker q_p$. Hence it suffices to show that $\Phi$ is surjective. 

Claim: $q_p(U)=Q_p$. In fact, note that $q_p(U)$ is a normal subgroup of $Q_p$ and the quotient map $F/U \to Q_p/q_p(U)$ is surjective. Hence, we have $|Q_p/ q_p(U)| \leq |R| < p$. Now consider the quotient map $\theta: Q_p \to Q_p/ q_p(U)$. For each elementary matrix $\I_3 + E_{ij}$ with $i\neq j$, we have $(\I_3 + E_{ij})^k = \I_3 + kE_{ij}$ in $Q_p$ for each $k \in \NN$. Hence, $\I_3 + E_{ij}$ has order $p$ in $Q_p$ and therefore, the order of $\theta(\I_3 + E_{ij})$ is either $1$ or $p$. However, $|Q_p/ q_p(U)| < p$ and hence, $\theta(\I_3 + E_{ij}) = 1$ in $Q_p/ q_p(U)$.  Since $Q_p$ is generated by these elementary matrices, we conclude that $\theta$ is trivial. Therefore, $q_p(U)=Q_p$.

The claim implies that $U/(U\cap\ker q_p) \cong q_p(U) = Q_p$. Hence, we obtain
\[
[F:U\cap\ker q_p] =[F:U]\cdot [U:U\cap\ker q_p]=|R|\cdot|Q_p|.
\]
Since these are finite groups, we conclude that $\Phi$ is surjective.
\end{proof}

We also need the following fact in representation theory.

\begin{lem}\label{lem:rep of Qp}
Every non-trivial complex irreducible representation of $Q_p$ has dimension at least $p-1$.
\end{lem}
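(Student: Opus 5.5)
This is the classical Frobenius bound for $\SL_3(\FF_p)$. The plan is to restrict an arbitrary non-trivial irreducible representation to a suitable abelian subgroup, namely the unipotent radical $A$ of a maximal parabolic, and to exploit the action of the Levi factor on the characters of $A$.

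Let $\rho:Q_p\to \mathrm{U}(V)$ be a non-trivial irreducible representation. The first step is to show that $\rho$ is faithful on the elementary matrices. Since $Q_p$ is generated by the elementary matrices $\I_3+E_{ij}$, and $\ker\rho$ is a proper normal subgroup, I will use the standard fact that $\SL_3(\FF_p)$ is quasi-simple: every proper normal subgroup lies in the centre $Z$. The centre consists of scalar matrices, so it meets the unipotent subgroups trivially. Hence $\rho$ is injective on
\[
A:=\{\I_3+xE_{13}+yE_{23}: x,y\in\FF_p\}\cong\FF_p^2.
\]
If I wanted to avoid citing simplicity, I would instead argue directly. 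If $\rho(\I_3+E_{ij})=1$ for one pair $(i,j)$, then conjugating by permutation matrices and taking commutators $[\I_3+E_{ij},\I_3+E_{jk}]=\I_3+E_{ik}$ shows that all elementary matrices lie in $\ker\rho$, which makes $\rho$ trivial. Applying the same argument to powers of $\I_3+E_{ij}$, and using that $(\I_3+E_{ij})^k$ is conjugate to $\I_3+E_{ij}$ for $k\ne0$ via a diagonal matrix, gives injectivity on $A$. This still requires a little care, because a non-identity element of $A$ need not be elementary; however, every such element is conjugate to $\I_3+E_{13}$, for instance by an element of the Levi subgroup.

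Next I decompose $\rho|_A$ into characters $\chi\in\widehat A\cong\FF_p^2$. Since $\rho|_A$ is faithful, some character $\chi\neq 1$ occurs. The Levi subgroup $L=\{\mathrm{diag}(h,\det h^{-1}) : h\in\GL_2(\FF_p)\}$ normalises $A$ and acts on $A$ by $v\mapsto \det(h)\,hv$. This action is transitive on $A\setminus\{0\}$, and therefore also transitive on the non-trivial characters. Because conjugation by $L$ permutes the isotypic components of $\rho|_A$, every one of the $p^2-1$ non-trivial characters occurs in $\rho|_A$. Consequently $\dim V\ge p^2-1\ge p-1$.

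The main obstacle is the first step, faithfulness on $A$. For that I would cite quasi-simplicity of $\SL_3(\FF_p)$ (its centre is the only maximal proper normal subgroup), or run the commutator argument sketched above. The transitivity claim is elementary linear algebra: $\GL_2(\FF_p)$ acts transitively on non-zero vectors, and the twist by $\det$ does not affect this.
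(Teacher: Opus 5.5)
Your proof is correct and rests on the same Frobenius-type mechanism as the paper's: a normalising subgroup acts transitively on the non-trivial characters of an abelian unipotent subgroup on which $\rho$ is non-trivial. You apply it to a larger subgroup, though. The paper uses only the root subgroup $\langle u\rangle$ with $u=\I_3+E_{12}$. Since $u$ normally generates $Q_p$, $\rho(u)\neq\Id_V$ has some eigenvalue $\zeta^k\neq 1$. Conjugation by $D_t=\mathrm{diag}(t,1,t^{-1})$, which sends $u$ to $u^t$, then forces all $p-1$ primitive $p$-th roots of unity to be eigenvalues.

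Your use of the unipotent radical $A\cong\FF_p^2$ and its Levi factor gives the stronger bound $p^2-1$. The price is the Levi computation and the need for transitivity on $\widehat A\setminus\{1\}$ rather than on $A\setminus\{0\}$. That second point deserves one line of justification. For instance, the image of $h\mapsto\det(h)h$ contains $\SL_2(\FF_p)$, which is closed under inverse transpose and transitive on non-zero vectors.

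The step you single out as the main obstacle can actually be trimmed. The orbit argument needs only one non-trivial character in $\rho|_A$, i.e.\ $\rho|_A\neq 1$. This follows from $\rho(\I_3+E_{13})\neq\Id_V$, that is, from normal generation of $Q_p$ by a single elementary matrix. That is exactly the paper's input, and it is what your permutation/commutator argument proves; use signed permutation matrices there when $p$ is odd. Faithfulness on $A$ then follows a posteriori. So you need neither quasi-simplicity nor the conjugacy of arbitrary non-identity elements of $A$.

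Finally, the later application only uses the bound $p_i-1\ge 2|F/U_i|$, so the paper's lighter argument is all that is required there.
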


\begin{proof}
Fix a non-trivial complex irreducible representation $\rho: Q_p \to \GL(V)$. 
Let $u:=\I_3 + E_{12} \in Q_p$, and it is routine to check that the normal subgroup generated by $u$ coincides with $Q_p$. Hence, $A:=\rho(u) \neq \Id_V$. Since $u^p=\I_3$, we have $A^p = \Id_V$. Then $A$ is diagonalizable and has an eigenvalue $\zeta^k$ for $\zeta:=e^{2\pi i/p}$ and $k \in \{1,\cdots, p-1\}$. 

For $t\in \FF_p^\times$, denote 
\[
D_t=\begin{pmatrix}t&0&0\\0&1&0\\0&0&t^{-1}\end{pmatrix} \in Q_p.
\]
Direct calculations show that  $D_tuD_t^{-1}=\I_3 + tE_{12} = u^t$, which implies that $A$ and $A^t$ have the same eigenvalues. Therefore, $\zeta, \zeta^2, \cdots, \zeta^{p-1}$ are eigenvalues of $A$, which implies that $\dim V \geq p-1$. 
\end{proof}

Now we use property (T) to bound the spectral gap of certain representations of $F$. Let us fix some notation. Recall that for a unitary representation $\pi:F \to \U(\H)$, the associated normalised Laplacian is 
\[
 \Delta_{\pi}=\Id_\H-\frac1{|S|}\sum_{s\in S}\pi(s).
\]
Also define the \emph{energy} to be
\[
\E_{\pi}(v)=\langle\Delta_{\pi} v,v\rangle
 =\frac1{2|S|}\sum_{s\in S}\|\pi(s)v-v\|^2 \quad \text{for} \quad v\in \H.
\]
We will use the same notation for unitary representations of quotients $F/N$ with the image of $S$ as the finite generating set of $F/N$. Repeated images are counted with multiplicity.

Since $\SL_3(\ZZ)$ has property (T) (see, \emph{e.g.}, \cite[Theorem 4.2.5]{BHVBook}), there exists a constant $\lambda>0$ such that every representation of $F$ factoring through $q$ satisfies
\begin{equation}\label{eq:kazhdan}
 \Delta_\pi\ge\lambda(\Id-P_\pi).
\end{equation}
Fix such a $\lambda$ in the sequel.

The following result is crucial, providing a uniform spectral gap for certain representations. 

\begin{lem}\label{lem:tensor spectral gap}
Let $\rho: F \to \U(\H_\rho)$ be a $d$-dimensional irreducible unitary representation factoring through $q$, and $\sigma: F \to \U(\H_\sigma)$ be any $m$-dimensional unitary representation. Then we have
\[
 \Delta_{\rho\otimes\overline\sigma}
 \ge\frac\lambda2\left(1-\frac md\right)\Id_{\H_\rho \otimes \overline{\H_\sigma}}.
\]
\end{lem}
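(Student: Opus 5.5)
The plan is to identify $\H_\rho\otimes\overline{\H_\sigma}$ with the Hilbert--Schmidt space $\HS(\H_\sigma,\H_\rho)$. Under this identification $\rho\otimes\overline\sigma$ acts by $T\mapsto T_g:=\rho(g)T\sigma(g)^*$, and the energy is $\E(T)=\frac{1}{2|S|}\sum_{s\in S}\|T_s-T\|_{\HS}^2$. The obstacle is that $\rho\otimes\overline\sigma$ need not factor through $q$, since $\sigma$ is arbitrary, so \eqref{eq:kazhdan} cannot be applied to it directly. I would get around this by transporting $T$ into a representation that does factor through $q$, namely $\rho\otimes\overline\rho$ acting on $\HS(\H_\rho)$ by $X\mapsto\rho(g)X\rho(g)^*$. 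Because $\rho$ is irreducible, Schur's lemma says the invariant vectors of $\rho\otimes\overline\rho$ are exactly the scalars $\CC\,\Id_{\H_\rho}$. Hence \eqref{eq:kazhdan} gives
\[
\E_{\rho\otimes\overline\rho}(X)\ \ge\ \lambda\,\Bigl\|X-\tfrac{\tr X}{d}\Id\Bigr\|_{\HS}^2 .
\]

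The vector I would transport is $X:=TT^*$, or its square root $|T^*|$. The point is that $\rho(g)TT^*\rho(g)^*=T_gT_g^*$, because the $\sigma(g)$ factors cancel. For $X=TT^*$ one has $T_sT_s^*-TT^*=(T_s-T)T_s^*+T(T_s-T)^*$, which bounds the energy of $X$ by a multiple of $\|T\|_{\mathrm{op}}^2\,\E(T)$. For the lower side, $X$ is positive of rank at most $m$ with eigenvalues $x_1,\dots,x_m$. Cauchy--Schwarz gives $(\sum x_i)^2\le m\sum x_i^2$, hence
\[
\Bigl\|X-\tfrac{\tr X}{d}\Id\Bigr\|_{\HS}^2=\sum x_i^2-\frac{(\sum x_i)^2}{d}\ \ge\ \Bigl(1-\frac md\Bigr)\|X\|_{\HS}^2 .
\]
This is where the factor $1-\frac md$ comes from.

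The main obstacle is matching normalisations. The naive estimate pits $\|X\|_{\HS}^2=\sum x_i^2$ against $\|T\|_{\mathrm{op}}^2\|T\|_{\HS}^2=\max_i x_i\cdot\sum x_i$, and these are not comparable in the right direction. To remove this I would use the quadratic version $Y=|T^*|=(TT^*)^{1/2}$, so that $\|Y\|_{\HS}=\|T\|_{\HS}$. I would then apply a Powers--Størmer type inequality, for instance
\[
\|U|T^*|U^*-|T^*|\|_{\HS}^2\le\|T_s-T\|_{\HS}\,\|T_s+T\|_{\HS},
\]
or directly the identity $\|T_s-T\|^2_{\HS}=\|T_s\|^2+\|T\|^2-2\mathrm{Re}\,\tr(T^*T_s)$ combined with the trace inequality $\mathrm{Re}\,\tr(T^*T_s)\le\tr(|T^*|\,\rho(s)|T^*|\rho(s)^*)$. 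The latter holds because $T$ and $T_s$ have polar parts that can be absorbed into the traces via $|\tr(AB)|\le\tr(|A^*|\,|B|)$ style bounds. Together these should give $\E_{\rho\otimes\overline\rho}(Y)\le\E_{\rho\otimes\overline\sigma}(T)$. The Kazhdan estimate applied to $Y$ then yields $\E(T)\ge\lambda(1-\frac md)\|T\|^2_{\HS}$, with the constant $\frac\lambda2$ leaving slack for a factor lost in the trace inequality. I expect this comparison of energies of $T$ and $|T^*|$ to be the technical heart, and would first try to establish it in the form $\|T_s-T\|^2\ge\|\rho(s)Y\rho(s)^*-Y\|^2$, possibly up to a factor $2$.
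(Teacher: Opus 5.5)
Up to the energy comparison your argument is exactly the paper's. You identify $\H_\rho\otimes\overline{\H_\sigma}$ with $\HS(\H_\sigma,\H_\rho)$ and pass to $Y=|T^*|=(TT^*)^{1/2}$ in $\Ad\rho$, which factors through $q$ and has invariant vectors $\CC\,\Id$ by Schur. The rank bound $\tr(Y)^2\le m\|Y\|_{\HS}^2$ with $\|Y\|_{\HS}=\|T\|_{\HS}$ then gives $\E_{\Ad\rho}(Y)\ge\lambda(1-\frac md)\|T\|_{\HS}^2$.

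\textbf{The gap.} It lies in the step you call the technical heart, and neither justification you offer works.

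The trace inequality $\mathrm{Re}\,\tr(T^*T_s)\le\tr(|T^*|\,\rho(s)|T^*|\rho(s)^*)$ is false. On $\CC^2$ take $T=e_1\otimes\overline{e_1}$, $\sigma(s)=\Id$, and $\rho(s)$ unitary with $\rho(s)e_1=u:=(e_1+e_2)/\sqrt2$. Then $\tr(T^*T_s)=\langle u,e_1\rangle=1/\sqrt2$, but $\tr(|T^*|\,|T_s^*|)=|\langle u,e_1\rangle|^2=1/2$. Equivalently, $\|\rho(s)Y\rho(s)^*-Y\|_{\HS}^2=1>2-\sqrt2=\|T_s-T\|_{\HS}^2$. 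With $u=\cos\theta\, e_1+\sin\theta\, e_2$ the ratio is $1+\cos\theta\to2$. So the per-generator comparison you planned to prove needs a factor $2$, and that factor is sharp. It is not optional slack.

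The Powers--St{\o}rmer bound you write is true, but it is only linear in $\|T_s-T\|_{\HS}$. For $\|T\|_{\HS}=1$ it yields $\E_{\Ad\rho}(Y)\le\sqrt{2\E(T)}$, hence only $\E(T)\ge\frac{\lambda^2}{2}(1-\frac md)^2$. That would still give a uniform gap for the later corollary, but it does not prove the lemma with the stated constant.

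\textbf{What is missing.} You need the quadratic estimate $\||A^*|-|B^*|\|_{\HS}^2\le2\|A-B\|_{\HS}^2$. This is the Araki--Yamagami inequality the paper invokes. Apply it to $A=T$, $B=T_s$, using $|T_s^*|=\rho(s)Y\rho(s)^*$. Summing over $S$ gives $\E_{\Ad\rho}(Y)\le2\E_{\rho\otimes\overline\sigma}(T)$, and hence exactly $\frac\lambda2(1-\frac md)$.

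You can prove this estimate directly in two steps.
First, for self-adjoint $X=\sum_i\lambda_iu_iu_i^*$ and $X'=\sum_j\mu_jv_jv_j^*$,
\[
\||X|-|X'|\|_{\HS}^2=\sum_{i,j}(|\lambda_i|-|\mu_j|)^2|\langle u_i,v_j\rangle|^2\le\|X-X'\|_{\HS}^2 .
\]
Second, apply this to the self-adjoint dilations of $A$ and $B$ on $\H_\rho\oplus\H_\sigma$, with off-diagonal entries $A$ and $A^*$. Their absolute values are $|A^*|\oplus|A|$ and $|B^*|\oplus|B|$, and their difference has squared $\HS$-norm $2\|A-B\|_{\HS}^2$.
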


Let us fix some notation. For finite-dimensional Hilbert spaces $\H_1$ and $\H_2$, denote $\HS(\H_2,\H_1)$ the set of all Hilbert-Schmidt operators with the inner product $\langle T,S \rangle_{\HS}:=\tr(S^*T)$ and norm $\|T\|_{\HS}$. If $\H_1 = \H_2 = \H$, denote $\HS(\H) = \HS(\H,\H)$. Identify $\H_1 \otimes \overline{\H_2}$ with $\HS(\H_2, \H_1)$ by sending each elementary tensor $v \otimes \overline{w}$ to the operator $T_{v,w}: \xi \mapsto \langle \xi,w \rangle v$. Equipping $\H_1 \otimes \overline{\H_2}$ with the tensor norm, this provides an isometric isomorphism from $\H_1 \otimes \overline{\H_2}$ to $\HS(\H_2,\H_1)$. 

%

\begin{proof}[Proof of Lemma~\ref{lem:tensor spectral gap}]
Under the identification of $\H_\rho \otimes \overline{\H_\sigma}$ and $\HS(\H_\sigma, \H_\rho)$ as above, $\rho\otimes\overline\sigma$ corresponds to the unitary representation $\pi: F \to \U(\HS(\H_\sigma, \H_\rho))$ defined by $\pi(g)(T) = \rho(g) T \sigma(g)^\ast$ for $g \in F$ and $T \in \HS(\H_\sigma, \H_\rho)$. 
Consider the adjoint representation $\Ad \rho: F \to \U(\HS(\H_\rho))$ defined by
\[
\big((\Ad \rho)(g)\big)(A):=\rho(g) A \rho(g)^* \quad \text{for} \quad g\in F \quad \text{and}  \quad A \in \HS(\H_\rho).
\]
Since $\rho$ factors through $q$, so does $\Ad \rho$. Hence, \eqref{eq:kazhdan} shows that 
\begin{equation}\label{EQ:Lap}
\Delta_{\Ad \rho} \geq \lambda(\Id_{\HS(\H_\rho)}-P_{\Ad \rho}). 
\end{equation}
Since $\rho$ is irreducible, Schur's lemma shows that $\Ad \rho$-invariant vectors are $\CC \cdot \Id_{\H_\rho}$.

Fix $T \in \HS(\H_\sigma, \H_\rho)$ with $\|T\|_{\HS} =1$, and denote $B:=(TT^*)^{1/2} \geq 0$. Then
\[
\|B\|_{\HS} = \tr(B^2)^{1/2} = \tr(TT^*)^{1/2} = \|T\|_{\HS} = 1.
\]
Also $\rank(B) = \rank(TT^*) = \rank(T) \leq m$. Hence, we have
\begin{equation}\label{EQ:trB}
\tr(B)^2 \leq \rank(B) \cdot \tr(B^2) = \rank(B) \leq m.
\end{equation}
On the other hand, note that
\[
P_{\Ad \rho}(B) = \frac{\langle B, \Id_{\H_\rho} \rangle_{\HS}}{\|\Id_{\H_\rho}\|_{\HS}^2} \cdot \Id_{\H_\rho} = \frac{\tr(B)}{d} \cdot  \Id_{\H_\rho}.
\]
Combining with \eqref{EQ:trB}, we obtain
\[
\big\|(\Id_{\HS(\H_\rho)}-P_{\Ad \rho})B\big\|_{\HS}^2 =\|B\|_{\HS}^2-\big\|P_{\Ad \rho}(B)\big\|_{\HS}^2 =1-\frac{(\tr(B))^2}{d} \geq 1-\frac{m}{d}.
\]
Combining the above inequality with \eqref{EQ:Lap}, we obtain
\begin{align}\label{EQ:engery estimate}
\E_{\Ad \rho}(B) &=\langle\Delta_{\Ad \rho}B,B\rangle_{\HS} \geq \lambda\langle(\Id_{\HS(\H_\rho)}-P_{\Ad \rho})B, B\rangle_{\HS}\\
&=\lambda\|(\Id_{\HS(\H_\rho)}-P_{\Ad \rho})B\|_{\HS}^2 \geq \lambda\left(1-\frac md\right). \notag
\end{align}

Finally, for each $s\in S$ note that
\[
\big((\rho(s)T\sigma(s)^*) \cdot (\rho(s)T\sigma(s)^*)^*\big)^{1/2} = (\rho(s)B^2\rho(s)^*)^{1/2} = \rho(s)B\rho(s)^*.
\]
Thus, \cite[Theorem 1]{AY81} shows that $\|\rho(s)B\rho(s)^*-B\|_{\HS}^2 \leq 2 \|\rho(s)T\sigma(s)^*-T\|_{\HS}^2$. Hence,
\[
\E_{\Ad \rho}(B) =\frac1{2|S|} \sum_{s\in S} \big\|\rho(s)B\rho(s)^*-B\big\|_{\HS}^2 \leq \frac1{|S|} \sum_{s\in S} \big\|\rho(s)T\sigma(s)^*-T\big\|_{\HS}^2 = 2\E_{\pi}(T).
\]
Combining with \eqref{EQ:engery estimate}, we conclude $\E_{\pi}(T) \geq \frac\lambda2\left(1-\frac md\right)$ for any $T \in \HS(\H_\sigma, \H_\rho)$ with $\|T\|_{\HS} =1$. This finishes the proof. 
\end{proof}

For the sequences $\{M_i\}_i$, $\{U_i\}_i$ and $\{p_i\}_i$ chosen at the beginning of this section, Lemma~\ref{lem:finite group prodduct} shows that $F/M_i \cong Q_{p_i} \times (F/U_i)$ for each $i$. Denote $K_i:=Q_{p_i} \times \{1_{F/U_i}\} \cong U_i/M_i$ as a subgroup of $F/M_i$. We have the following relative expansion result.

\begin{cor}\label{cor:spetral gap}
With the same notation as above, for each $i \in \NN$ and unitary representation $\pi: F/M_i \to \U(\H)$, we have
\begin{equation}\label{EQ:spectral gap final}
\Delta_\pi\geq \frac{\lambda}{4}(\Id_{\H}-P_{K_i}), \quad \text{where} \quad P_{K_i}=|K_i|^{-1}\sum_{k\in K_i}\pi(k).
\end{equation}
\end{cor}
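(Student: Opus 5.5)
The plan is to decompose $\pi$ into irreducibles of $F/M_i\cong Q_{p_i}\times R_i$, where $R_i:=F/U_i$, and to treat each irreducible separately. Since both sides of \eqref{EQ:spectral gap final} respect orthogonal direct sums, it suffices to consider an irreducible unitary representation $\pi$ of $Q_{p_i}\times R_i$. Every such representation has the form $\rho\otimes\tau$, with $\rho$ an irreducible representation of $Q_{p_i}$ and $\tau$ an irreducible representation of $R_i$. The operator $P_{K_i}$ is the projection onto the $K_i$-invariant vectors, i.e.\ onto vectors fixed by $\rho\otimes\Id$. This splits the argument into two cases.

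If $\rho$ is trivial, then $P_{K_i}=\Id$ and the inequality reduces to $\Delta_\pi\ge0$, which always holds.

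If $\rho$ is non-trivial, then $P_{K_i}=0$, so I must show $\Delta_\pi\ge\frac\lambda4\Id$. By Lemma~\ref{lem:rep of Qp}, $d:=\dim\rho\ge p_i-1$. On the other hand, $m:=\dim\tau\le|R_i|^{1/2}\le |R_i|$, and $p_i>2|R_i|$ gives $m/d\le |R_i|/(p_i-1)\le 1/2$. (The last step uses $p_i-1\ge 2|R_i|$, which follows since $p_i>2|R_i|$ and both are integers.)

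To apply Lemma~\ref{lem:tensor spectral gap}, I view everything as representations of $F$ through the quotient map $F\to F/M_i$. The representation $\rho\circ q_{p_i}$ factors through $q$ because $q_{p_i}$ is $q$ followed by reduction mod $p_i$; it is also irreducible as a representation of $F$, since $q_{p_i}$ is surjective. Write $\tau=\overline{\sigma}$ with $\sigma:=\overline\tau$ pulled back to $F$, which is $m$-dimensional. The representation $\pi$ of $F$ is then exactly $\rho\otimes\overline\sigma$, evaluated on the same generators $S$, so the Laplacians agree. Lemma~\ref{lem:tensor spectral gap} yields
\[
\Delta_\pi\ge\frac\lambda2\Bigl(1-\frac md\Bigr)\Id\ge\frac\lambda4\Id,
\]
as required.

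The only points needing care are the following. First, the decomposition of irreducibles of a direct product of finite groups as outer tensor products. Second, identifying the $K_i$-invariant projection $P_{K_i}$ with the averaging operator, which is standard for finite groups. Third, the irreducibility of the pulled-back $\rho$. Finally, passing from irreducible summands back to a general $\pi$ is immediate, because $\Delta_\pi$ and $P_{K_i}$ are both block-diagonal with respect to any decomposition into invariant subspaces. I expect no real obstacle: the substance is already contained in Lemma~\ref{lem:tensor spectral gap}, and the dimension bound $m/d\le1/2$ is where the choice $p_i>2|F/U_i|$ enters.
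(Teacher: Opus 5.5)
Your proposal is correct and follows the paper's proof essentially step for step. Like the paper, you reduce to irreducibles $\rho\otimes\overline{\sigma}$ of $Q_{p_i}\times F/U_i$, handle trivial $\rho$ via $P_{K_i}=\Id$, and for non-trivial $\rho$ combine Lemma~\ref{lem:rep of Qp} with $p_i>2|F/U_i|$ to get $m/d\le 1/2$ before invoking Lemma~\ref{lem:tensor spectral gap}. Your bound $\dim\tau\le|R_i|^{1/2}$ is sharper than the paper's $\dim\H_\sigma\le|F/U_i|$, but it is not needed.
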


\begin{proof}
Decomposing $\pi$ into a direct sum of irreducible representations, we can assume that $\pi$ is irreducible. Then $\pi$ has the form $\rho\otimes\overline{\sigma}$, where $\rho: Q_{p_i} \to \U(\H_\rho)$ and $\sigma: F/U_i \to \U(\H_\sigma)$ are irreducible unitary representations. 

Recall that $K_i\cong Q_{p_i}\times\{1_{F/U_i}\}$, so $\pi|_{K_i}=\rho\otimes\Id_{\overline{\H_\sigma}}$.
If $\rho$ is trivial, then $P_{K_i}=\Id_{\H}$ and hence, \eqref{EQ:spectral gap final} holds. Now suppose that $\rho$ is non-trivial. Since $\rho$ is irreducible, it has no non-zero invariant vectors. Consequently, $P_{K_i}=0$. Denote $\rho':=\rho \circ q_{p_i}$, and $\sigma': F \to \U(\H_\sigma)$ by $g \mapsto \sigma(gU_i)$. Clearly, $\rho'$ factors through $q$ and remains irreducible since $q_{p_i}$ is surjective. Lemma~\ref{lem:rep of Qp} and the choice of $p_i$ imply
\[
\dim\H_\rho\geq p_i-1\geq 2|F/U_i| \geq 2\dim\H_\sigma.
\]
Applying Lemma~\ref{lem:tensor spectral gap} to $\rho'\otimes\overline{\sigma'}$, we obtain $\Delta_{\rho'\otimes
\overline{\sigma'}} \geq \frac{\lambda}{4}\Id_{\H}$. Since $(\rho'\otimes\overline{\sigma'})(g) = \pi(gM_i)$ for each $g\in F$, 
we obtain
\[
\Delta_\pi = \Delta_{\rho'\otimes \overline{\sigma'}} \geq \frac{\lambda}{4}\Id_{\H} =\frac{\lambda}{4}(\mathrm{Id}_{\mathcal H}-P_{K_i}).
\]
This finishes the proof.
\end{proof}

The estimate in Corollary~\ref{cor:spetral gap} gives a uniform spectral gap relative to the subgroups \(K_i\), and hence a relative Poincar\'{e} inequality in the sense of Arzhantseva and Tessera \cite{AT15}. Also it is not hard to see that non-trivial elements of \(K_i\) escape to infinity. Thus, it follows from \cite[Proposition~2.3]{DK18} and \cite[Theorem 4]{Tes09} that the box space $\boxspace_{\M}(F)$ is not coarsely embeddable into Hilbert space. Applying Theorem~\ref{thm:CE iff aTmenable}, we obtain that $G_{\M}(F)$ is not a-T-menable. For the convenience of the reader, we give a direct proof below.


\begin{proof}[Proof of Theorem~\ref{thm:counterexample}]
First, it is clear from the construction that each $M_i$ is normal in $F$ and has finite index. Also $M_i\subseteq U_i\subseteq M_{i-1}$ and $M_i\subseteq W_i$, so the chain is nested and $\bigcap_i M_i = \{1_F\}$. Hence, $\mathcal M$ is an approximating sequence of $F$. 
Moreover, \cite[Lemma 2.8]{Wil15} shows that $G_{\mathcal{W}}(F)$ is metrically amenable for $\mathcal W=\{W_i\}_i$. Applying Corollary~\ref{cor:refinement preserves WCP}, we obtain that $G_{\M}(F)$ is also metrically amenable.


Next, we show that $G_{\M}(F)$ does not have the Haagerup property. Suppose that $G_{\M}(F)$ admits a continuous proper function $\psi$ of negative type, and let $\psi_i$ be its restriction to $F/M_i$ for $i\in \NN$. For each $g\in F$, the topology of $G_{\M}(F)$ implies that $\psi_i(gM_i) \to \psi(\infty,g)$ as $i\to \infty$. Hence, 
\begin{equation}\label{eq:bounded}
 C:=\sup_{i\in \NN}\frac1{|S|}\sum_{s\in S}\psi_i(sM_i)<\infty.
\end{equation}

For each $i\in \NN$, since $\psi_i$ is a negative-type function on $G_i:=F/M_i$ with $\psi_i(1_{G_i}) = 0$, the classical GNS construction provides a unitary representation $\pi_i: G_i \to \U(\H_i)$ and a 1-cocycle $b_i: G_i \to \H_i$ such that $\psi_i(g) = \|b_i(g)\|^2$. For $v_i:=\frac1{|G_i|}\sum_{h\in G_i}b_i(h)\in\H_i$, the cocycle condition of $b_i$ implies that $\pi_i(g)v_i + b_i(g) = v_i$ for each $g \in G_i$, which implies that $\psi_i(g) = \|\pi_i(g)v_i - v_i\|^2$. Hence for $K_i$ defined above, we have
\begin{align}\label{EQ:estimate 2}
\frac1{|K_i|}\sum_{k\in K_i}\psi_i(k) &= \frac1{|K_i|}\sum_{k\in K_i}\|\pi_i(k)v_i - v_i\|^2 = \frac1{|K_i|}\sum_{k\in K_i} \big(2\|v_i\|^2 -2\mathrm{Re}\langle\pi_i(k)v_i,v_i\rangle\big) \\
&= 2\|v_i\|^2 - 2\langle P_{K_i}v_i,v_i\rangle = 2\|v_i\|^2 - 2\|P_{K_i}v_i\|^2=2\|(\I_{\H_i}-P_{K_i})v_i\|^2.\notag
\end{align}
Now Corollary~\ref{cor:spetral gap} shows that
\begin{equation}\label{EQ:estimate 3}
2\|(\I_{\H_i}-P_{K_i})v_i\|^2 \leq \frac8\lambda\langle\Delta_{\pi_i}v_i,v_i\rangle = \frac{4}{\lambda|S|}\sum_{s\in S}
\|\pi_i(sM_i)v_i-v_i\|^2 = \frac{4}{\lambda|S|}\sum_{s\in S} \psi_i(sM_i).
\end{equation}
Combining \eqref{eq:bounded}, \eqref{EQ:estimate 2} and \eqref{EQ:estimate 3}, we obtain
\[
\frac1{|K_i|}\sum_{k\in K_i}\psi_i(k) \leq \frac{4}{\lambda|S|}\sum_{s\in S} \psi_i(sM_i) \leq \frac{4C}{\lambda}.
\]
Since $|K_i|\ge2$ and $\psi_i(e)=0$, we can choose $k_i\in K_i\setminus\{e\}$ such that
\begin{equation}\label{eq:point}
 \psi_i(k_i)\le \frac{8C}{\lambda}.
\end{equation}
On the other hand, note that $K_i = U_i/M_i$ and $|k_i|_{G_i}= \min\{|g|_S:gM_i=k_i\}$. For any $g\in F$ with $gM_i=k_i$, we have $g\in U_i \setminus M_i \subseteq W_i \setminus \{1_F\}$. Since $\{W_i\}_i$ is an approximating sequence, we have
\[
r_i:=\min\{|g|_S:g\in W_i\setminus\{1_F\}\}\to \infty \quad \text{as} \quad i \to \infty.
\]
By Lemma~\ref{lem:length function}, the continuous length function $\hat\ell$ on $G_{\M}(F)$ satisfies
\[
\hat\ell(i,k_i)=|k_i|_{G_i}\ge r_i\to \infty.
\]
Hence the set $\{(i,k_i):i\in\NN\}$ is not relatively compact. This contradicts the properness of $\psi$, since \eqref{eq:point} gives $\psi(i,k_i)=\psi_i(k_i)\le8C/\lambda$ for every $i$.
\end{proof}

\begin{rem}\label{rem:no tau M}
By Proposition~\ref{prop:tau metric T}, $F$ does not have property $(\tau)$ with respect to $\M$. This is consistent with Corollary~\ref{cor:spetral gap}, which provides a uniform spectral gap only on the orthogonal complement of the \(K_i\)-invariant vectors.
\end{rem}

\begin{rem}\label{rem:weakly embedded}
Note that the associated box space $\boxspace_{\M}(F)$ does not weakly contain expanders (see \cite[Definition 1.3]{DK18} for the definition). Hence, it provides another example that neither weakly contains expanders nor coarsely embeds into Hilbert space. The first such example was discovered in \cite{AT15} (see also \cite{DK18}).

To see this, take $N_0=F$, $N_{n+1} = N_n^{(2)}$ as in Example~\ref{ex:AGS}, and $d_n=[F:N_n]$. Given $i \in \NN$, set $n(i)=\max\{n\ge0:d_n\le i\}$ and $B_i=N_{n(i)}$.
Then $n(i)\to\infty$ and $\mathcal B=\{B_i\}_i$ is an approximating sequence whose box space coarsely embeds into Hilbert space by \cite{AGS12}. Since $[F:B_i]\leq i$, the definition of $W_i$ gives $M_i\subseteq W_i\subseteq B_i$.
As $M_i$ and $B_i$ are finite-index normal subgroups of $F$, the assertion follows from \cite[Proposition~2.4]{DK18}.
\end{rem}

\subsection{A-T-menability does not imply metric amenability}\label{subsec:reverse}

We follow the notation from Example~\ref{ex:F2 non+non}. Let $F=\langle A,B\rangle\cong\mathbb F_2$ and $\mathcal C=\{C_n\}_{n\in\NN}$ be the congruence sequence from Example~\ref{ex:F2 non+non}. Set $S:=\{A,A^{-1},B,B^{-1}\}$. 
We show the following:

\begin{thm}\label{thm:reverse counterexample}
For $C_n$ defined in Example~\ref{ex:F2 non+non}, set $D_n:=C_n^{(2)}$ and $\mathcal D:=\{D_n\}_{n\in\NN}$. Then $G_{\mathcal D}(F)$ is a-T-menable but not metrically amenable.
\end{thm}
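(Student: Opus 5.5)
The plan has two parts: a-T-menability, which is immediate, and failure of metric amenability, which I would attack by an order-three twist. For a-T-menability, since $\mathcal C$ is an approximating sequence of $F\cong\mathbb F_2$ (Example~\ref{ex:F2 non+non}), Proposition~\ref{lem:square refinement} applies directly. It shows that $\mathcal D=\mathcal C^{(2)}$ is an approximating sequence, that $\boxspace_{\mathcal D}(F)$ coarsely embeds into Hilbert space, and hence that $G_{\mathcal D}(F)$ is a-T-menable. The substance is therefore the failure of metric amenability. By Lemma~\ref{lem:willett criterion}, it suffices to produce $x\in\CC[F]$ with
\[
\sup_n\|\tilde\lambda_{F/D_n}(x)\|<\|x\|_{C^*_{\max}(F)}.
\]
Equivalently, I want a self-adjoint element that is uniformly positive in every $\tilde\lambda_{F/D_n}$ but has $0$ in its spectrum in $C^*_{\max}(F)$.

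The element will be a twisted Laplacian. Note that $A,B\in\Gamma(2)$, so every $F/C_n$ is a $2$-group. Since $C_n/D_n$ is an elementary abelian $2$-group, every $F/D_n$ is a $2$-group as well. Consider the composition $F\to\SL_2(\FF_3)\to\SL_2(\FF_3)^{\mathrm{ab}}\cong\ZZ/3$, followed by an embedding into $U(1)$. This gives a character $\chi$ of order three whose kernel contains the congruence subgroup $F\cap\Gamma(3)$; I would check that it is nontrivial on $F$ (and on each $C_n$) by direct computation with $A,B$ mod $3$. Set
\[
x:=1-\frac1{|S|}\sum_{s\in S}\overline{\chi(s)}\,s .
\]
This element is self-adjoint and satisfies $\chi(x)=0$. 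Hence $0$ lies in the spectrum of $x$ in $C^*_{\max}(F)$. For any unitary representation $\pi$ one has $\pi(x)=\Delta_{\pi\otimes\overline\chi}$. So it suffices to find $c>0$ such that $\Delta_{\pi\otimes\overline\chi}\ge c$ for every irreducible $\pi$ of every $F/D_n$. Once this holds, the faithful-representation argument from the proof of Proposition~\ref{prop:tau metric T} gives $x\ge c$ in $C^*_{\max}(F)$, which is a contradiction. Observe that $\pi\otimes\overline\chi$ never has invariant vectors, since $\chi$ does not factor through the $2$-group $F/D_n$. The whole problem is to make the gap uniform in $n$.

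Fix an irreducible $\pi$ of $F/D_n$ and split $\H_\pi$ according to the action of the elementary abelian normal subgroup $C_n/D_n$. On the subspace fixed by $C_n$, the representation $\pi\otimes\overline\chi$ factors through $F/(C_n\cap\ker\chi)$. This quotient is dominated by the congruence quotient $F/(F\cap\Gamma(3\cdot2^n))$. Selberg's property $(\tau)$ for $\{F\cap\Gamma(m)\}_m$, as in Example~\ref{ex:F2 non+non}, then gives a uniform gap $\varepsilon$ away from invariant vectors, and there are none. On the complement, $C_n$ acts through nontrivial characters of $C_n/D_n$ of order two, while $\chi|_{C_n}$ is nontrivial of order three. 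For $h\in C_n$ with $\chi(h)\ne1$, the operator $(\pi\otimes\overline\chi)(h)$ has eigenvalues $\pm\overline{\chi(h)}$, all at distance at least $1$ from $1$.

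The main obstacle is the twisted lifting estimate (the analogue of Lemma~\ref{lem:twisted lifting}): turning this "displacement by $h\in C_n$" into a lower bound on the energy $\E_{\pi\otimes\overline\chi}$, which involves only the generators $S$. The elements $h$ have word length growing with $n$, so a naive triangle inequality fails. My first attempt would be a Clifford-theory argument. The group $F/C_n$ permutes the nontrivial characters of $C_n/D_n$. I would induce from the stabiliser and compare the resulting representation with a twisted representation of $F/(C_n\cap\ker\chi)$, possibly of a finite-index congruence subgroup. Property $(\tau)$ is inherited by finite-index subgroups \cite{AE12}, and there the order-three twist again kills invariant vectors. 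Finally, I would combine the two pieces with a standard almost-orthogonality estimate. This step is used, because $C_n/D_n$ is normal, $\pi(F)$ permutes the $C_n/D_n$-isotypic pieces and so preserves both the $C_n$-fixed subspace and its complement.
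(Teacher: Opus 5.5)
\textbf{There is a genuine gap: the step you call the main obstacle is the whole content of the non-metric-amenability claim, and your proposal does not supply it.}

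The following parts of your plan match the paper:
\begin{itemize}
\item a-T-menability via Proposition~\ref{lem:square refinement};
\item the order-three character $\chi$ (using $\SL_2(\FF_3)^{\mathrm{ab}}\cong\ZZ/3\ZZ$ works as well as the paper's route through $A_4/V_4$);
\item the element $x$ with $\chi(x)=0$, and the final faithfulness contradiction;
\item the $C_n$-trivial part, handled by Selberg's property $(\tau)$ at level $3\cdot 2^n$.
\end{itemize}
What is missing is a uniform lower bound on $\Delta_{\pi\otimes\overline\chi}$ for irreducible $\pi$ of $F/D_n$ that are non-trivial on $E_n=C_n/D_n$. (For irreducible $\pi$ there is nothing to ``combine'': the $E_n$-fixed subspace is $F$-invariant, so it is $0$ or everything.)

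Your Clifford-theory sketch will not close this gap, for two reasons.
\begin{itemize}
\item The characters of $E_n\cong H_1(C_n;\ZZ/2\ZZ)$ that occur are in general not congruence characters. So the representation induced from the stabiliser factors through no $F/(F\cap\Gamma(m))$, and there is no congruence representation to compare it with. In fact, untwisted, these representations have no uniform gap at all. The Cayley graphs of $F/D_n$ are not expanders, because $\boxspace_{\mathcal D}(F)$ coarsely embeds, while the $C_n$-trivial part does have a gap by Selberg. So the small eigenvalues live exactly in the $E_n$-non-trivial part, and any argument must use $\chi$ there in a way that interacts with the $\pm1$ signs.
\item The stabilisers $L_n\supseteq C_n$ vary with $n$; generically $L_n=C_n$, whose index $|F/C_n|$ tends to infinity. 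Finite-index inheritance of $(\tau)$ gives a constant that depends on the subgroup and its generating set. That constant degrades as the index grows when you transport it back to $S$.
\end{itemize}
Your observation that $(\pi\otimes\overline\chi)(h)$ has eigenvalues $\pm\overline{\chi(h)}$ is correct, but, as you note, it cannot be converted into energy with respect to $S$.

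The missing idea is a mechanism that kills the signs while keeping an order-three twist. The paper does this by squaring the phase (Lemma~\ref{lem:twisted lifting}):
\begin{itemize}
\item In $\ell^2(H_n)$, $H_n=F/D_n$, the right translations $R_k$ for $k\in E_n$ are commuting self-adjoint unitaries that commute with $\Delta_{\chi,H_n}$. So one can choose a lowest eigenvector $f$ with $f(xk)=\varepsilon_k f(x)$, where $\varepsilon_k=\pm1$.
\item Push $f$ down to $Q_n=F/C_n$ by $u(p(x))=\sqrt{|E_n|}\,T(f(x))$, where $T(z)=z^2/|z|$.
\item $T$ is even, norm-preserving and $2$-Lipschitz, and satisfies $T(\omega z)=\omega^2T(z)$ for $\omega\in\mathbb T$. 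Hence $u$ is a well-defined unit vector with $\langle\Delta_{\chi^2,Q_n}u,u\rangle\le4\langle\Delta_{\chi,H_n}f,f\rangle$.
\item The representation $\overline\chi^{\,2}\otimes\tilde\lambda_{Q_n}$ factors through level $3\cdot2^n$. It has no invariant vectors, since $\chi^2$ has order three and $Q_n$ is a $2$-group. So Selberg's $(\tau)$ gives $\Delta_{\chi^2,Q_n}\ge\delta$, and hence $\Delta_{\chi,H_n}\ge\delta/4$ uniformly in $n$.
\end{itemize}
Without this step, or an equivalent one, your argument stops short of the theorem.
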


To prove it, we first establish a spectral-gap estimate for elementary abelian $2$-group extensions. Let us fix some notation.

Let $\Gamma$ be a finitely generated group with a finite symmetric generating set $S$. For a finite quotient $H$ of $\Gamma$, denote $q_H: \Gamma \to H$ the canonical quotient map and $\tilde{\lambda}_H=\lambda_H \circ q_H: \Gamma \to \B(\ell^2(H))$, where $\lambda_H$ denotes the left regular representation of $H$. 
Given a character $\chi:\Gamma\to\mathbb T$, the normalised Laplacian of the twisted representation
$\overline\chi\otimes\tilde{\lambda}_H$ is denoted by
\[
\Delta_{\chi,H}:=\Id-\frac1{|S|}\sum_{s\in S}\overline{\chi}(s)\tilde{\lambda}_H(s).
\]
Clearly, for any $f\in \ell^2(H)$ we have
\begin{equation}\label{eq:twisted energy}
\langle\Delta_{\chi,H}f,f\rangle
=\frac1{2|S|}\sum_{s\in S}\|\tilde{\lambda}_H(s)f-\chi(s)f\|_2^2.
\end{equation}

The following is the main technical ingredient in the proof of Theorem~\ref{thm:reverse counterexample}.

\begin{lem}\label{lem:twisted lifting}
Let $H,Q$ be finite quotients of $\Gamma$ and $p:H\to Q$ be a surjective homomorphism satisfying $q_Q=p\circ q_H$ such that $\ker p$ is an elementary abelian $2$-group (i.e., $(\ZZ/2\ZZ)^r$ for some $r\ge0$).
If $\chi:\Gamma\to\mathbb T$ is a character and $\Delta_{\chi^2,Q}\ge\delta \cdot \Id$ for some $\delta>0$, then $\Delta_{\chi,H}\ge\frac{\delta}{4} \cdot \Id$.
\end{lem}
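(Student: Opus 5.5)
The plan is to compare the twisted energy \eqref{eq:twisted energy} of a unit vector $f\in\ell^2(H)$ with the $\chi^2$-twisted energy of a vector built from $f\otimes f$. Then the hypothesis $\Delta_{\chi^2,Q}\ge\delta$ applies, and the loss in passing from $f$ to $f\otimes f$ accounts for the factor $4$. I have not settled the reduction step (second paragraph), and I flag it below as the main obstacle.

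\emph{Step 1: reduce to one character of $A:=\ker p$.} The right translation action of $A$ on $\ell^2(H)$ commutes with $\lambda_H$, so it commutes with $\Delta_{\chi,H}$. Hence $\ell^2(H)=\bigoplus_{\eta\in\widehat A}V_\eta$ with
\[
V_\eta:=\{f: f(xa)=\eta(a)f(x) \text{ for all } x\in H,\ a\in A\},
\]
and each $V_\eta$ is $\Delta_{\chi,H}$-invariant. It therefore suffices to prove the bound on each $V_\eta$. The point of the hypothesis on $A$ is that $\eta^2=1$ for every $\eta\in\widehat A$, since $A$ is an elementary abelian $2$-group.

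\emph{Step 2: the squaring trick.} Fix a unit vector $f\in V_\eta$ and set $F:=f\otimes f\in\ell^2(H\times H)$, a unit vector. For $s\in S$, the triangle inequality and $(a+b)^2\le 2(a^2+b^2)$ give
\[
\|(\tilde\lambda_H\otimes\tilde\lambda_H)(s)F-\chi(s)^2F\|^2\le 2\|\tilde\lambda_H(s)f-\chi(s)f\|^2+2\|\tilde\lambda_H(s)f-\chi(s)f\|^2 .
\]
Averaging over $s$ yields $\mathcal E_{\chi^2}(F)\le 4\,\mathcal E_\chi(f)$. So it is enough to show $\mathcal E_{\chi^2}(F)\ge\delta$. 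Since $\eta^2=1$, $F$ is invariant under the diagonal right action of $A$: $F(xa,ya)=F(x,y)$. We therefore need a $\chi^2$-twisted spectral gap for $\tilde\lambda_H\otimes\tilde\lambda_H$ restricted to the diagonal-$A$-invariant subspace, and we want to derive it from the gap for $\tilde\lambda_Q$.

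\emph{Step 3 (main obstacle): identify that subspace with copies of $\lambda_Q$.} I would first apply the Fell-type change of variables $G(x,z):=F(x,xz)$. This converts $\lambda_H\otimes\lambda_H$ into $\lambda_H$ acting in the first variable. The diagonal $A$-invariance becomes $G(xa,a^{-1}za)=G(x,z)$. Decomposing along $A$-conjugacy orbits in $z$ then expresses the invariant subspace as a sum of quasi-regular representations $\ell^2(H/C_A(z))$.

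These quasi-regular representations equal $\tilde\lambda_Q$ only when $z$ centralises $A$, so this identification does not immediately give the gap from the hypothesis on $Q$. My first attempt would be to avoid the tensor square on the nontrivial-$\eta$ pieces. Instead I would use the function $x\mapsto f(x)\overline{f(x c)}$, or $f^2$ suitably normalised, which descends to $Q$ because $\eta^2=1$. I would then control its $\chi^2$-energy directly by that of $f$ via the pointwise identity
\[
uv-\chi^2u'v'=(u-\chi u')v+\chi u'(v-\chi v').
\]
Combined with Cauchy--Schwarz, this should again give the constant $4$. The trivial piece $V_1\cong\ell^2(Q)$ would be treated the same way, using $\Delta_{\chi^2,Q}$ rather than $\Delta_{\chi,Q}$.
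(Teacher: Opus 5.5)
Your Step~1 is correct and matches the paper's reduction: right translations by $E=\ker p$ commute with $\Delta_{\chi,H}$, so it suffices to treat unit vectors $f$ with $f(xk)=\varepsilon_k f(x)$, $\varepsilon_k\in\{\pm1\}$. The gap is everything after that.

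As you suspected, the tensor-square route is circular. The non-trivial $A/C_A(z)$-isotypic parts of $\ell^2(H/C_A(z))$ are again spaces of the form $V_{\eta'}$, now with twist $\chi^2$, which is exactly what you cannot control. The part of $f\otimes f$ lying over $z=e$ is just $f^2$.

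Your fallback does not close the gap either.
\begin{itemize}
\item The function $x\mapsto f(x)\overline{f(xc)}$ has the wrong twist. $\tilde\lambda_H(s)$ multiplies it approximately by $\chi(s)\overline{\chi(s)}=1$, so it can only be tested against the untwisted Laplacian on $Q$, whose kernel contains the constants. Your own identity $uv-\chi^2u'v'=\dots$ is built for $f\cdot f$, not $f\cdot\overline f$. (It also descends to $Q$ only when $c$ fixes $\eta$ under conjugation.)
\item The function $f^2$ has the right twist and descends, but $\|f^2\|_2=\|f\|_4^2$ is not comparable to $\|f\|_2^2$. Your identity with Cauchy--Schwarz only gives $\|\tilde\lambda_H(s)f^2-\chi(s)^2f^2\|_2\le 2\|f\|_\infty\|\tilde\lambda_H(s)f-\chi(s)f\|_2$, or the same with $\|f\|_4$. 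After normalising you lose a factor $\|f\|_\infty^2/\|f\|_4^4\ge1$ (resp.\ $\|f\|_2^2/\|f\|_4^2$), which is not uniformly bounded.
\end{itemize}
For a concrete failure, take $E$ trivial, $\chi=1$ and $H$ a long cycle. The indicator of an interval of length $L$ plus a spike of height $L^{1/4}$ has normalised energy of order $L^{-1/2}$, while its square has normalised energy of order $1$. So this route gives no dimension-free constant, let alone $4$.

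The missing idea is to normalise pointwise rather than globally: double the phase of $f$ but keep its modulus. Put $T(z)=z^2/|z|$ with $T(0)=0$, and define $u(p(x)):=\sqrt{|E|}\,T(f(x))$.
\begin{itemize}
\item Since $T(-z)=T(z)$, $u$ is well defined on $Q$.
\item Since $|T(z)|=|z|$ and $|f|$ is constant on $E$-cosets, $\|u\|_2=\|f\|_2=1$.
\item Since $T(\omega z)=\omega^2T(z)$ for $\omega\in\mathbb T$ and $T$ is $2$-Lipschitz (not your product identity), you get pointwise $|T(f(q_H(s)^{-1}x))-\chi(s)^2T(f(x))|=|T(f(q_H(s)^{-1}x))-T(\chi(s)f(x))|\le 2|f(q_H(s)^{-1}x)-\chi(s)f(x)|$.
\end{itemize}
Squaring and summing over $x\in H$, using $q_Q=p\circ q_H$, gives $\delta\le\langle\Delta_{\chi^2,Q}u,u\rangle\le4\langle\Delta_{\chi,H}f,f\rangle$. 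This is the paper's argument. It handles every $\eta$ at once, including the trivial piece $V_1$, so no separate treatment is needed.
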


\begin{proof}
Let \(E=\ker p\) and \(\mu\) be the smallest eigenvalue of \(\Delta_{\chi,H}\). For each \(k\in E\), define the right translation 
\[
(R_k f)(x):=f(xk)\quad \text{for} \quad f\in \ell^2(H) \quad \text{and} \quad x\in H.
\]
Since \(E\) is an elementary abelian \(2\)-group, \(\{R_k:k\in E\}\) are commuting self-adjoint unitaries and also commute with \(\Delta_{\chi,H}\). Hence, \(\Delta_{\chi,H}\) admits a $\mu$-eigenvector \(f \in \ell^2(H)\) with $\|f\|_2=1$ such that for each $k \in E$, there exists $\varepsilon_k\in\{1,-1\}$ satisfying
\[
f(xk)=\varepsilon_k f(x) \quad \text{for any} \quad x\in H.
\]

Define \(T:\CC\to\CC\) by \(T(0)=0\) and $T(z):=z^2/|z|$ for $z \neq 0$. Then we have
\begin{equation}\label{EQ:property of T}
|T(z)|=|z|,\quad T(-z)=T(z), \quad \text{and} \quad T(\omega z)=\omega^2T(z) \quad \text{for any} \quad \omega\in\mathbb T.
\end{equation}
Moreover, \(T\) is \(2\)-Lipschitz. Now define \(u:Q\to\CC\) by
\[
u(p(x)):=\sqrt{|E|}\cdot T(f(x)) \quad \text{for} \quad x\in H.
\]
This is well-defined since $T(f(xk))=T(\varepsilon_k f(x))=T(f(x))$ for any $k \in E$. Moreover, since \(|T(z)|=|z|\) and \(|f|\) is constant on each right \(E\)-coset, we have
\[
\|u\|_2^2 =|E|\cdot \sum_{y\in Q}|T(f(x_y))|^2=\sum_{x\in H}|f(x)|^2=1,
\]
where \(x_y \in H\) is any representative of \(y\in Q\).

For \(s\in S\) and \(x\in H\), the properties of $T$ in \eqref{EQ:property of T} and $T$ being $2$-Lipschitz give
\begin{align*}
|(\tilde{\lambda}&_Q(s)u)(p(x)) -\chi(s)^2u(p(x))| 
=\sqrt{|E|} \cdot \left|T(f(q_H(s)^{-1}x))-T(\chi(s)f(x))\right|\\
&\le2\sqrt{|E|}\cdot \left|f(q_H(s)^{-1}x)-\chi(s)f(x)\right|=2\sqrt{|E|}\cdot |(\tilde{\lambda}_H(s)f)(x)-\chi(s)f(x)|.
\end{align*}
Squaring and summing over $x\in H$, we obtain
\[
|E| \cdot \|\tilde{\lambda}_Q(s)u-\chi(s)^2u\|_2^2 =
\sum_{x\in H}|(\tilde{\lambda}_Q(s)u)(p(x))-\chi(s)^2u(p(x))|^2\le
4|E|\cdot \|\tilde{\lambda}_H(s)f-\chi(s)f\|_2^2.
\]
Using \eqref{eq:twisted energy} and the assumption that $\Delta_{\chi^2,Q}\ge\delta\Id$, we obtain
\[
\delta = \delta \cdot \|u\|_2^2 \leq \langle\Delta_{\chi^2,Q}u,u\rangle \le4\langle\Delta_{\chi,H}f,f\rangle =4\mu.
\]
Hence, we conclude $\Delta_{\chi,H}\ge\frac{\delta}{4}\cdot \mathrm{Id}$.
\end{proof}

Now we prove Theorem~\ref{thm:reverse counterexample}.

\begin{proof}[Proof of Theorem~\ref{thm:reverse counterexample}]
By Proposition~\ref{lem:square refinement}, $\mathcal D$ is an approximating sequence and $G_{\mathcal D}(F)$ is a-T-menable. It remains to show that it is not metrically amenable. Put
\[
H_n:=F/D_n,\quad Q_n:=F/C_n\quad \text{and} \quad  E_n:=C_n/D_n.
\]
Since $C_n$ is finitely generated and every element of $E_n$ has order at most $2$, $E_n$ is an elementary abelian $2$-group. Also, $Q_n$ is a finite $2$-group. Indeed, since $A$ and $B$ are congruent to the identity modulo $2$, $Q_n$ is contained in the kernel of $\SL_2(\ZZ/2^n\ZZ) \to \SL_2(\ZZ/2\ZZ)$. It is a $2$-group as any element has order dividing $2^{n-1}$.

Reduction modulo $3$ maps $F$ onto $\mathrm{SL}_2(\mathbb F_3)$, since the reductions of \(A^{-1}\) and \(B^{-1}\) are the standard elementary matrices \(E_{12}(1)\) and \(E_{21}(1)\), which generate \(\SL_2(\mathbb F_3)\). Consider
\[
\SL_2(\mathbb F_3) \longrightarrow \mathrm{PSL}_2(\mathbb F_3)\cong A_4 \longrightarrow A_4/V_4\cong\ZZ/3\ZZ
\longrightarrow\mathbb T,
\]
where $A_4$ is the alternating group and $V_4$ is the Klein four group. Here the first isomorphism follows from \cite[Section 11.2.1]{Bonnafe11}, and the last arrow is a non-trivial character sending $\bar{1}$ to $e^{2\pi i/3}$. Composing with reduction modulo $3$, we obtain a character $\chi:F\to\mathbb T$ of order $3$.

For each $n \in \NN$, consider the unitary representation 
\[
\pi_n:=\overline{\chi}^2 \otimes \tilde{\lambda}_{Q_n}: F \longrightarrow \U(\CC \otimes \ell^2(Q_n)) \cong \U(\ell^2(Q_n)). 
\]
Since \(\chi\) factors through reduction modulo \(3\) and \(\tilde{\lambda}_{Q_n}\) factors through reduction modulo \(2^n\), the representation \(\pi_n\) factors through \(F/(F\cap\Gamma(3\cdot2^n))\). 

Moreover, $\pi_n$ has no non-zero invariant vectors. Indeed, if $v\ne0$ were invariant, then $\tilde{\lambda}_{Q_n}(g)v =\chi(g)^2v$ for any $g\in F$.
Taking \(g\in C_n\), we obtain \(v=\chi(g)^2v\) and hence \(\chi(g)^2=1\). Thus \(C_n\subseteq\ker\chi^2\), which implies that \(\chi^2\) factors through \(Q_n\). This is impossible since \(Q_n\) is a finite \(2\)-group whereas \(\chi^2\) has order \(3\).

As explained in Example~\ref{ex:F2 non+non}, $F$ has property $(\tau)$ with respect to the family $\{F\cap\Gamma(m)\}_{m\in \NN}$. Since \(\pi_n\) factors through \(F/(F\cap\Gamma(3\cdot2^n))\) as shown above, there exists $\delta>0$ such that $\Delta_{\chi^2,Q_n} =\Delta_{\pi_n}\ge\delta \cdot \Id$ for any $n\in\NN$. Applying Lemma~\ref{lem:twisted lifting}, we obtain $\Delta_{\chi,H_n}\ge\frac{\delta}{4} \cdot \Id$ for any $n\in \NN$.

Finally, consider the self-adjoint element
\[
a:=1-\frac1{|S|}\sum_{s\in S}\overline{\chi(s)}s \in \CC[F].
\]
Then direct calculations show
\[
\tilde{\lambda}_{H_n}(a) =\Delta_{\chi,H_n} \ge\frac{\delta}{4} \cdot \Id \quad \text{for every} \quad n\in \NN.
\]
If $G_{\mathcal D}(F)$ were metrically amenable, Lemma~\ref{lem:willett criterion} would imply that the representation  $\Lambda:=\bigoplus_n\tilde{\lambda}_{H_n}$ is faithful on $C^*_{\max}(F)$. Hence $a \ge \frac{\delta}{4} \cdot 1_{F}$ in $C^*_{\max}(F)$.
However, the one-dimensional representation $\chi$ satisfies
\[
\chi(a) =1-\frac1{|S|}\sum_{s\in S} \overline{\chi(s)}\chi(s)=0,
\]
a contradiction. Therefore, $G_{\mathcal D}(F)$ is not metrically amenable.
\end{proof}

\section{Conclusion}\label{sec:conclusions}

We have constructed different HLS groupoids over the same free group
$F=\mathbb F_2$, which exhibit all four possible combinations
of a-T-menability and metric amenability.
The results are summarised in Table~\ref{tab:four-cases}.
None of these groupoids is topologically amenable,
since $\mathbb F_2$ is non-amenable
\cite[Lemma~2.4]{Wil15}.


\begin{table}[htbp]
\centering
\small
\setlength{\tabcolsep}{4pt}
\begin{tabular}{ccccl}
\hline
HLS groupoid & A-T-menable & Metrically amenable
& Property $(\tau)$ & Reference \\
\hline
$G_{\mathcal V}(F)$ & Yes & Yes & No
& Ex.~\ref{ex:both properties}, Cor.~\ref{cor:tau obstruction} \\
$G_{\mathcal D}(F)$ & Yes & No & No
& Thm.~\ref{thm:reverse counterexample}, Cor.~\ref{cor:tau obstruction} \\
$G_{\mathcal M}(F)$ & No & Yes & No
& Thm.~\ref{thm:counterexample},
  Rem.~\ref{rem:no tau M} \\
$G_{\mathcal C}(F)$ & No & No & Yes
& Ex.~\ref{ex:F2 non+non} \\
\hline
\end{tabular}
\caption{The four combinations of a-T-menability
and metric amenability for HLS groupoids over
$F = \mathbb F_2$. Property $(\tau)$ refers to
$F$ relative to the corresponding
approximating sequence.}
\label{tab:four-cases}
\end{table}

\bibliographystyle{plain}
\bibliography{ref_HLS}

\end{document}